\documentclass[a4paper, 12pt]{article}
%\IEEEoverridecommandlockouts                              % This command is only
% include fig1.eps                                                          % needed if you want to
                                                          % use the \thanks command
%\overrideIEEEmargins
\usepackage{xr}
%%%% *** Do not adjust lengths that control margins, column widths, etc. ***
\usepackage{amsmath}
\usepackage{amsthm}
\usepackage{graphicx}
\usepackage{amsfonts}
\usepackage[usenames]{color}
\usepackage{amssymb}
\usepackage{amsbsy}
\usepackage{graphics}
\usepackage{cite}
\usepackage[hidelinks]{hyperref}

\usepackage{bbold}
\usepackage[a4paper,bindingoffset=0.2in,%
            left=1in,right=1in,top=2in,bottom=1in,%
            footskip=.25in]{geometry}

\setlength{\voffset}{-0.75in}
\setlength{\headsep}{2pt}
\textheight = 700pt

\usepackage[rflt]{floatflt}
\usepackage{wrapfig} 
\usepackage{subfigure} 

\usepackage{scalerel,stackengine}
\stackMath
\newcommand\rwh[1]{%
\savestack{\tmpbox}{\stretchto{%
  \scaleto{%
    \scalerel*[\widthof{\ensuremath{#1}}]{\kern-.6pt\bigwedge\kern-.6pt}%
    {\rule[-\textheight/2]{1ex}{\textheight}}%WIDTH-LIMITED BIG WEDGE
  }{\textheight}% 
}{0.5ex}}%
\stackon[1pt]{#1}{\tmpbox}%
}

\usepackage[affil-it]{authblk}

\newtheorem{theorem}{\bf Theorem}[section]

\newtheorem{corollary}{\bf Corollary}[section]
\newtheorem{definition}{\bf Definition}[section]
\newtheorem{remark}{\bf Remark}[section]
\newtheorem{proposition}{\bf Proposition}[section]
\newcommand\blfootnote[1]{%
  \begingroup
  \renewcommand\thefootnote{}\footnote{#1}%
  \addtocounter{footnote}{-1}%
  \endgroup
}

\begin{document}
\date{}
\vspace{-2.5cm}
\title{\vspace{-1cm} \bf Lie algebra structure of fitness and replicator control}
%\author{Vidya Raju$^{1,2}$ and P.S. Krishnaprasad$^1,2}$}
\author[1,2,3]{Vidya Raju$^{*}$}
\author[1,2]{P. S. Krishnaprasad$^\dagger$}
\affil[1]{Department of Electrical and Computer Engineering, University of Maryland, College Park}
\affil[2]{Institute for Systems Research, University of Maryland, College Park}
\affil[3]{School of Engineering and Applied Sciences, Harvard University}
%\blfootnote{Subject classifications}
\vspace{-1cm}
\maketitle %\thispagestyle{empty} \pagestyle{empty}
\vspace{-1.5cm}
\blfootnote{\hspace{-0.6cm}2010 Mathematics Subject Classification: 91A22, 93B05, 70H25, 93C15, 92D25. \\ Keywords: Evolutionary Game, Replicator Dynamics, Fitness Map, Lie algebra, Controllability, Variational Principle. \\ Email: $^*$vidya@g.harvard.edu, $^\dagger$krishna@isr.umd.edu}
%\section*{Abstract}.\\

\begin{abstract}
For over fifty years, the dynamical systems perspective has had a prominent role in evolutionary biology and economics, through the lens of game theory. In particular, the study of replicator differential equations on the standard (probability) simplex, specified by fitness maps or payoff functions, has yielded insights into the temporal behavior of such systems. However behavior is influenced by context and environmental factors with a game-changing quality (i.e., fitness maps are manipulated). This paper develops a principled geometric approach to model and understand such influences by incorporating replicator dynamics into a broader control-theoretic framework. Central to our approach is the construction of a Lie algebra structure on the space of fitness maps, mapping homomorphically to the Lie algebra of replicator vector fields. This is akin to classical mechanics, where the Poisson bracket Lie algebra of functions maps to associated Hamiltonian vector fields. We show, extending the work of Svirezhev in 1972, that a trajectory of a replicator vector field is the base integral curve of a solution to a Hamiltonian system defined on the cotangent bundle of the simplex. Further, we exploit the Lie algebraic structure of fitness maps to determine controllability properties of a class of replicator systems.
\end{abstract}
\section{Introduction}
Evolutionary game theory has laid the groundwork for modeling the effect of individual interactions between members of a population comprising different phenotypes on their respective population fractions, with applications spanning several domains. Maynard Smith and Price\cite{1} illustrated the implications of small, non-fatal conflicts between conspecifics or members of competing species using the hawk-dove game example under a specific payoff (fitness) structure. In 1978, Taylor and Jonker\cite{2} offered a mathematical model for the discrete, generational update of the population frequencies on the probability simplex, as in the Maynard Smith-Price formulation. The deterministic ordinary differential equation (o.d.e) limit of the discrete frequency update is given by the replicator dynamics (vector field) with fitness linearly dependent on the population frequencies\cite{3,4}. While originating from evolutionary biology, replicator dynamics has found many applications such as modeling evolution of genotypes in population genetics\cite{5}, capturing the evolution of the frequencies of interacting species described by Lotka-Volterra equations\cite{6}, demonstrating the prevalence of certain pursuit strategies in nature\cite{7}, economics\cite{8} and more recently, in modeling the concentrations of reacting chemical species\cite{9}, to list a few. In these examples, types are interpreted variously to be genotypes, types of animal species, dyadic pursuit strategies, investment strategies, or chemical species. Here and in many other examples in the literature, we observe that the fitness of the types is either frequency independent or linear in the frequencies, and \textit{time-independent}. But system behavior is influenced by context and environmental factors suggesting that fitness maps be temporally modulated. This embeds the replicator dynamics into a broader concept of \textit{replicator control systems} where the control signals specify modulation of fitness maps and associated vector fields. In the field of geometric control theory, questions such as controllability are addressed using Lie algebras generated by the modulated vector fields\cite{10}. Here, controllability is concerned with the existence of fitness modulation that enables transfer of state on the simplex from an arbitrary initial condition to a final condition. We find that a Lie algebraic structure in the space of fitness maps (analogous to the Poisson bracket structure of classical mechanics) is effective in settling controllability properties of replicator control systems. Separately we also establish links to other aspects of classical mechanics such as gradient flows, and variational principles - in particular extending the work of Svirezhev \cite{11}. In what follows we discuss the organization and contributions of this paper.

In this work, we first focus on the algebraic and geometric characteristics of replicator dynamics defined by general fitness maps. In sections \ref{fitness} and \ref{lastruct}, we define and use a Lie algebraic structure on the space of smooth fitness maps under a \textit{replicator bracket}. Further, in section \ref{lastruct1}, we show that replicator vector fields are closed under Jacobi-Lie bracket and the passage from fitness maps to vector fields is a Lie algebra homomorphism. This property has an analogy in mechanics, where the assignment from the Poisson bracket algebra of Hamiltonian functions to the Lie algebra of Hamiltonian vector fields is a Lie algebra anti-homomorphism (figure \ref{homodiag}). We note that every simplex-preserving dynamics is a replicator dynamics of appropriate fitness when restricted to the interior of the simplex. See also Smale \cite{45}.

Fisher's fundamental theorem of natural selection states that along solutions to replicator dynamics of genetics (selection equations), the rate of change of mean fitness of a population is proportional to the additive fitness variance. This follows from the gradient property of selection equations with respect to the natural Riemannian metric of the simplex, namely the Fisher-Rao-Shahshahani metric\cite{12}. Fisher's theorem is not true for a general fitness (nonlinear in the frequencies). Kimura's maximum principle \cite{12} states that the rate of increase in mean fitness is highest along solutions to the selection equations, compared to other simplex-preserving dynamics. In contrast to these well-known optimality principles, Svirezhev \cite{11} showed that for selection and migration equations, an integral variational principle holds, akin to Hamilton's principle of least action. In section \ref{var}, we note that solutions to replicator dynamics are also extremals for a variational principle (extending Svirezhev\cite{11}). The proof of this result appears in \cite{13} and is included in the electronic \textcolor{blue}{supplementary calculations section (a)}. We then consider the existence of periodic solutions to associated Hamiltonian systems by exploiting symmetry as in Birkhoff's theorem\cite{14} on reversible systems. We illustrate these results on the one dimensional simplex using the example of Prisoner's Dilemma. 
%In related work\cite{halderetal2019cognitivecost}, we pursue optimal control principles and associated Hamiltonian systems arising from the maximum principle of Pontryagin and co-workers\cite{pontryagin1962mathematical}.

In section \ref{cs}, we investigate replicator control systems specified by linear combinations of fitness maps modulated by control signals. We determine sufficient conditions for a class of such systems to be controllable by considering Jacobi-Lie brackets of constituent vector fields. Due to the homomorphism property noted in section \ref{lastruct1}, such conditions on the vector fields are implied by conditions on Lie algebras generated by fitness maps. It is then natural to consider optimal control problems going beyond the mechanical principles of section \ref{var} inspired by Svirezhev. In particular, using the maximum principle of Pontryagin and co-workers\cite{15} and associated Hamiltonian systems, the paper\cite{16} discusses a class of optimal control problems concerned with assimilating trajectory data from studies of starling flocks.
%It is then natural and interesting to investigate optimal control problems of state transfer on the simplex, as we point out in section \ref{conclusion}. 

\textbf{Historical Remark:} As noted earlier in this introduction, the origins of replicator dynamics can be traced to multiple scientific domains \cite{3} including species competition. With respect to time-dependence of such equations, there are antecedents in the literature on predator-prey models \cite{17}. Leenheer and Aeyels \cite{18} take up the problem of controllability of equations of Lotka-Volterra type defined on the positive orthant. They also employ methods of geometric nonlinear control theory (Lie brackets of vector fields etc.) to find sufficient conditions for controllability. Connections between Lotka-Volterra equations and replicator dynamics on the simplex are well-known and discussed in \cite{6} and also in \cite{19}. But in \cite{18} these connections are not exploited, and hence they do not relate their controllability arguments to properties of fitness maps. In their papers \cite{46} and \cite{47}, building on earlier work by Hofbauer and others, Duarte and Alishah investigate Hamiltonian evolutionary games and related Poisson structures for a class of systems referred to as polymatrix games, which specialize to replicator dynamics with linear fitness under appropriate hypotheses. There does not seem to be any relation between these Poisson structures and the Lie algebra structure on the space of smooth nonlinear fitness maps given by the R-bracket in Section \ref{lastruct} of this paper.

\section{Fitness maps and replicator vector fields \label{fitness}}
% universality in interior of the simplex similar to Smale's work
Consider the $n-1$ dimensional probability simplex:
\begin{align}
\Delta^{n-1} = \left\{x = (x_1,x_2,\hdots, x_n) \in \mathbb{R}^n : 0\leq x_i \leq 1, i=1,\hdots,n, \ \sum\limits_{i=1}^{n}x_i=1 \right\}
\end{align}
Denote the interior of the simplex as follows:
\begin{align}
int(\Delta^{n-1}) = \left\{x\in \Delta^{n-1}: x_i > 0 \  \forall \ i=1,\hdots,n\right\}
\end{align}
\begin{definition}
A fitness map $f$ is a smooth map that assigns to each point in the simplex, an $n$ dimensional vector whose coordinates are smooth functions $f^i$ which denote the fitness of the $i^{th}$ type: 
\begin{align}
f: \Delta^{n-1}& \longrightarrow \mathbb{R}^n \notag
\end{align}
\begin{align}
x \ \ &\longmapsto f(x) = \left[\begin{array}{c}f^1(x) \\  f^2(x) \\  \vdots \\ f^n(x) \end{array}\right], \ f^i  \in C^\infty(\Delta^{n-1}) \ \forall \ i=1,\hdots,n
\end{align}
where $C^{\infty}(\Delta^{n-1})$ is the space of smooth real-valued functions on the simplex. 
\end{definition}
The replicator dynamics for $n$ types specifies the evolution of the probability vector $x$ associated with the propagation or diminution of the types depending on how the fitness of each type compares with the average fitness $\bar{f}(x)= \sum_i x_i f^i(x)$ in the following way:
\begin{align}
\dot{x}_i &= x_i \left(f^i(x) - \bar{f}(x) \right), \ i=1,\cdots,n \label{rep}
\end{align}
Equations (\ref{rep}) are nonlinear and simplex preserving. This can be verified by observing that $\sum_i \dot{x}_i =0$ and $x_i = 0 \implies \dot{x}_i = 0$. Due to the latter property, it is also sub-simplex preserving.  That is, any type that is extinguished remains extinguished for all future time. Letting $\Lambda(x) = \text{diag}(x_1,\hdots,x_n)$, $\textbf{e} = [1 \ \hdots \ 1]^T \in \mathbb{R}^n$, the replicator dynamics (\ref{rep}) defined by the fitness $f(x)$ can be written compactly as
\begin{align}
\dot{x} &= \Lambda(x)\left(f - \bar{f} \textbf{e}\right) \triangleq \hat{f}(x) \label{fhat}
\end{align}
where $\hat{f}(x) = [\hat{f}^1(x) \ \hat{f}^2(x) \ \hdots \ \hat{f}^n(x)]^T$ with $\hat{f}^i(x) = x_i(f^i(x) - \bar{f}(x))$. We denote by $\mathcal{X}(\Delta^{n-1})$, the set of all vector fields on the simplex, and by $\mathcal{X}_R(\Delta^{n-1})$, the family of replicator vector fields. 
\begin{remark}\label{liediffdef}
The Lie derivative $\mathcal{L}_{\hat{f}}(\phi)$ of a function $\phi \in C^{\infty}(\Delta^{n-1})$ along a replicator vector field $\hat{f}$ is denoted as
\begin{align}
X_f \phi &= \sum\limits_{i=1}^{n}\hat{f}^i\dfrac{\partial \phi}{\partial x_i} \label{liediff} 
\end{align}
\end{remark}
\begin{remark}
If $f$ is component-wise uniform, i.e., $f = \alpha(x)\mathbf{e}$ where $\alpha$ is a scalar function, then $\hat{f}=0$.  \label{compunif}
\end{remark}
\begin{remark}
If $\hat{f} = 0$ for $x\in int(\Delta^{n-1})$, $\exists \ \alpha$, a scalar function such that $f^1 = f^2 = \hdots = f^n = \alpha(x)$. \label{compunifrev}
\end{remark}
Let $T\Delta^{n-1}$ and $T_x\Delta^{n-1}$ respectively denote the tangent bundle and tangent space to the simplex at $x$. The tangent space is identified with $\{v \in \mathbb{R}^n : \sum\limits_{i}v_i = 0 \}$. From (\ref{fhat}), we see that $\hat{f}(x)\in T_x(\Delta^{n-1})$. The simplex is a Riemannian manifold with boundary equipped with the Fisher-Rao-Shahshahani (FRS) metric \cite{20,21,22} given by the metric tensor $G = [g_{ij}]$, where $g_{ij} = \delta_{ij}\dfrac{1}{x_i}, \ 1\leq i,j \leq n$ with $\delta_{ij}$ the Kronecker symbol, well defined in its interior. The inner product of two tangent vectors $v, w$ evaluated at $x \in \Delta^{n-1}$ as:
\begin{align}
\langle v, w\rangle_{FRS} = \sum\limits_{i=1}^{n}\dfrac{v_i w_i}{x_i} \label{frsmetric}
\end{align}

Although equations (\ref{rep}) suggest a particular form for the dynamics, replicator dynamics is more general than it appears, as shown in the following theorem. A result along these lines is already known to Smale \cite{45}.
\begin{theorem} \label{genrep}
Every smooth, simplex-preserving dynamics can be described by replicator dynamics with smooth fitness in the interior of the simplex.
\end{theorem}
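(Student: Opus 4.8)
The plan is to exhibit, for an arbitrary smooth simplex-preserving vector field $V \in \mathcal{X}(\Delta^{n-1})$ with components $V_i$, an explicit smooth fitness map $f$ on $int(\Delta^{n-1})$ whose associated replicator vector field $\hat{f}$ equals $V$. The guiding observation is that the replicator form $\hat{f}^i = x_i(f^i - \bar{f})$ is far less restrictive than it appears once one is in the interior, where every coordinate $x_i$ is strictly positive and so may be divided by. The whole argument thus reduces to recovering the $f^i$ by inverting this relation coordinate-wise.

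First I would extract the only genuine constraint that simplex-preservation places on $V$, namely tangency. Since the simplex lies in the affine hyperplane $\{\sum_i x_i = 1\}$, invariance of $\Delta^{n-1}$ under the flow of $V$ forces $V(x) \in T_x\Delta^{n-1}$ for every $x$; using the identification $T_x\Delta^{n-1} \cong \{v \in \mathbb{R}^n : \sum_i v_i = 0\}$ recorded above, this is precisely $\sum_{i=1}^{n} V_i(x) = 0$ for all $x \in \Delta^{n-1}$.

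With tangency in hand the construction is direct. On $int(\Delta^{n-1})$ define $f^i(x) := V_i(x)/x_i$ for $i = 1,\dots,n$; each $f^i$ is smooth there because $V_i$ is smooth and $x_i > 0$, so $f$ is a legitimate fitness map in the sense of the Definition above, restricted to the interior. Computing the mean fitness gives $\bar{f}(x) = \sum_i x_i f^i(x) = \sum_i V_i(x) = 0$ by the tangency identity, whence $\hat{f}^i(x) = x_i\big(f^i(x) - \bar{f}(x)\big) = x_i f^i(x) = V_i(x)$, i.e. $\hat{f} = V$ throughout the interior.

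I expect essentially no obstacle in the core computation; the real content lies in correctly isolating why the statement is confined to the interior. The division by $x_i$ that powers the construction degenerates on the boundary faces $\{x_i = 0\}$, so smoothness of $f$ cannot in general be claimed up to $\partial\Delta^{n-1}$, which matches the hypothesis of the theorem. I would also remark that the fitness recovered this way is not unique: by Remark \ref{compunif}, any component-wise uniform shift $f \mapsto f + \alpha(x)\mathbf{e}$ leaves $\hat{f}$ unchanged, so the choice yielding $\bar{f}\equiv 0$ above simply fixes one convenient representative within this gauge class.
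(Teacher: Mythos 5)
Your construction $f^i = V_i/x_i$ with $\bar f \equiv 0$ following from the tangency identity $\sum_i V_i = 0$ is exactly the paper's proof (there the vector field is called $\Phi$ and the fitness $f_\Phi$), so the proposal is correct and takes the same route. Your added remarks on the boundary degeneration and the gauge freedom under component-wise uniform shifts are accurate but not part of the paper's argument.
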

\begin{proof}
Consider the dynamics on the simplex given by:
\begin{align}
\dot{x}_i &= \Phi^i(x), \ \ i=1,\hdots,n \label{vec}
\end{align}
where $x \in \Delta^{n-1}$. Since this dynamics preserves the simplex, then it must necessarily satisfy $\sum_i\dot{x}_i = \sum_{i}\Phi^i(x) = 0$. In $int(\Delta^{n-1})$, this condition can be used to write equations (\ref{vec}) equivalently as:
\begin{align}
\dot{x}_i &= x_i \left(\dfrac{\Phi^i(x)}{x_i}\right) \notag\\
&= x_i \left(\dfrac{\Phi^i(x)}{x_i} - \sum\limits_{j=1}^{n}x_j\dfrac{\Phi^j(x)}{x_j} \right) \notag\\
&=  x_i (f_{\Phi}^i  - \bar{f}_{\Phi}) \label{vecrep}
\end{align}
where $f_\Phi^i(x) = \dfrac{\Phi^i}{x_i}$ and $\bar{f}_\Phi = \sum\limits_{j=1}^{n}x_j f_\Phi^j = \sum\limits_{j=1}^{n}x_j \dfrac{\Phi^j}{x_j} = 0$, which is a replicator dynamics associated with smooth fitness $f_\Phi$.
\end{proof}
The viewpoint that every dynamics on the simplex is a replicator dynamics is a generalization similar to associating every dynamics on the simplex to a respective Price equation\cite{23,24}. A consequence of theorem \ref{genrep} is that one can extend the analysis presented in this work for control systems defined by any simplex-preserving dynamics. This includes the generator equation associated with a continuous time finite state Markov process, Arrhenius equations in statistical physics \cite{25}, models for infectious diseases \cite{26}, selection-recombination equations \cite{27} and selection-mutation equations\cite{28}. 

As an example, we consider a dynamics from statistical mechanics \cite{25}. Suppose that a molecule can take one of $n$ conformations. Let $x_i(t)$ be the probability that it is in conformation $i$ for $i=1,\hdots,n$. Their evolution is given by the linear, time-invariant generator equations:
\begin{align}
\dot{x} = \mathcal{R}x \label{arrhenius}
\end{align}
The matrix $\mathcal{R}$ consists of non-negative off-diagonal elements $\mathcal{R}_{ij}$ which denote the rates of probability of transition from state $i$ to $j$ when $i\neq j$, while $\mathcal{R}_{jj} = -\sum\limits_{i: i\neq j}\mathcal{R}_{ij}$. When the state transitions are produced due to thermally activated transitions between potential wells with depths $E_i$ separated by energy barriers $B_{ij}$, a special case of (\ref{arrhenius}) called Arrhenius equations is obtained, with $\mathcal{R}_{ij} = k \exp^{-\beta\left(B_{ij} - E_j\right)}$. For these equations, one can find the fitness map as $f(x) = \Lambda^{-1}(x)\mathcal{R}x$. 

The first step towards understanding the controllability properties of a nonlinear dynamical system is to consider the Lie algebra generated by the control vector fields. 
\begin{theorem}Replicator vector fields are closed under the Jacobi-Lie bracket. \label{closuretheorem}
\end{theorem}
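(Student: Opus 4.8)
The plan is to take two fitness maps $f,g$ with associated replicator vector fields $\hat{f}$ and $\hat{g}$ and to exhibit a single smooth fitness map $h$ for which the Jacobi-Lie bracket satisfies $[\hat{f},\hat{g}] = \hat{h}$, thereby showing that $\mathcal{X}_R(\Delta^{n-1})$ is closed under the bracket. First I would write the bracket in coordinates,
\begin{align*}
[\hat{f},\hat{g}]^i = \sum_{k=1}^{n}\left(\hat{f}^k\frac{\partial \hat{g}^i}{\partial x_k} - \hat{g}^k\frac{\partial \hat{f}^i}{\partial x_k}\right),
\end{align*}
and substitute the defining form $\hat{f}^i = x_i\phi^i$, $\hat{g}^i = x_i\psi^i$, where $\phi^i = f^i - \bar{f}$ and $\psi^i = g^i - \bar{g}$. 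Using $\partial(x_i\psi^i)/\partial x_k = \delta_{ik}\psi^i + x_i\,\partial\psi^i/\partial x_k$ (and symmetrically for $\hat{f}$), each half of the bracket splits into an undifferentiated diagonal piece $x_i\phi^i\psi^i$ (respectively $x_i\psi^i\phi^i$) and a remaining piece carrying an explicit factor $x_i$.

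The key step is that the two diagonal pieces are equal and cancel, so the bracket collapses to
\begin{align*}
[\hat{f},\hat{g}]^i = x_i\sum_{k=1}^{n}x_k\left(\phi^k\frac{\partial \psi^i}{\partial x_k} - \psi^k\frac{\partial \phi^i}{\partial x_k}\right) =: x_i\,h^i.
\end{align*}
This is the crucial structural observation: \emph{every} component of the bracket retains the common factor $x_i$, so the candidate $h^i = [\hat{f},\hat{g}]^i/x_i$ extends smoothly across each boundary face $\{x_i = 0\}$, and since it is assembled from $f^i,g^i,\bar{f},\bar{g}$ and their first derivatives, $h^i \in C^{\infty}(\Delta^{n-1})$ for each $i$. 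Equivalently, the surviving factor $x_i$ encodes the sub-simplex-preserving property of $[\hat{f},\hat{g}]$, which is exactly what lets one divide by $x_i$ without losing smoothness at the boundary — a smooth representation that Theorem \ref{genrep} alone guarantees only in the interior.

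Finally I would verify that $[\hat{f},\hat{g}] = \hat{h}$, and not merely that $[\hat{f},\hat{g}]^i = x_i h^i$; this requires $\bar{h} = \sum_i x_i h^i = 0$. I would obtain this from tangency: since $\hat{f},\hat{g}\in T_x\Delta^{n-1}$ we have $\sum_i\hat{f}^i = \sum_i\hat{g}^i = 0$, and vector fields tangent to a submanifold are closed under the Jacobi-Lie bracket, so $\sum_i[\hat{f},\hat{g}]^i = \sum_i x_i h^i = 0$, giving $\bar{h}=0$ and hence $x_i h^i = x_i(h^i - \bar{h}) = \hat{h}^i$. (Note that $h$ is determined only modulo a component-wise uniform map, by Remark \ref{compunif}, which does not affect the conclusion.) The main obstacle is purely the bookkeeping in the coordinate expansion — confirming that the diagonal products $x_i\phi^i\psi^i$ are the only terms lacking an explicit factor of $x_i$ and that they cancel — so that no boundary-violating terms survive.
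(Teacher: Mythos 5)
Your proposal is correct, and its computational core is the same as the paper's: expand the bracket in coordinates, apply the product rule to $\hat{f}^i = x_i\phi^i$, and observe that the undifferentiated diagonal terms $x_i\phi^i\psi^i - x_i\psi^i\phi^i$ cancel, leaving an overall factor of $x_i$. Where you diverge is in the endgame. The paper continues to expand $\partial\psi^i/\partial x_k = \partial g^i/\partial x_k - g^k - \sum_l x_l\,\partial g^l/\partial x_k$ and then verifies by two further explicit computations that (a) the $\sum_k x_k(\phi^k g^k - \psi^k f^k)$ piece vanishes and (b) the double sum over $k,l$ is exactly the $x$-average of the single sum, so the bracket lands in the form $x_i(f_{\{1,2\}}^i - \bar{f}_{\{1,2\}})$ with $f_{\{1,2\}}^i = \sum_k x_k\bigl(\frac{\partial g^i}{\partial x_k}\phi^k - \frac{\partial f^i}{\partial x_k}\psi^k\bigr)$. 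You instead stop at $[\hat{f},\hat{g}]^i = x_i h^i$ and dispatch the normalization in one stroke by the tangency argument $\sum_i[\hat{f},\hat{g}]^i = 0 \Rightarrow \bar{h}=0$, which is valid and replaces both of the paper's residual cancellations. The trade-off: your route is shorter and makes the "why" transparent (the $x_i$ factor is sub-simplex preservation, the zero average is tangency to the simplex), but your $h$ differs from the paper's $f_{\{1,2\}} = \{f,g\}_R$ by an element of the ideal $\mathcal{I}$ of component-wise uniform maps (you note this); the paper's extra work is what isolates the specific representative $\{f,g\}_R$ that is reused verbatim in Section \ref{lastruct} to define the R-bracket and prove the homomorphism property of Theorem \ref{homtheorem}. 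For the closure statement itself, your argument is complete.
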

\begin{proof}
Suppose $f_1$ and $f_2$ are two fitness maps. The Jacobi-Lie bracket of the associated replicator vector fields is
\begin{align}
[\hat{f}_1,\hat{f}_2] = \dfrac{\partial \hat{f}_2}{\partial x}\hat{f}_1 - \dfrac{\partial \hat{f}_1}{\partial x}\hat{f}_2. \notag
\end{align}
Consider the $i^{th}$ component of the bracket:
\begin{align}
[\hat{f}_1, \hat{f}_2]^i = \sum\limits_{r=1}^{n}\dfrac{\partial \hat{f}_2^i}{\partial x_r}\hat{f}_1^r - \sum\limits_{r=1}^{n}\dfrac{\partial \hat{f}_1^i}{\partial x_r}\hat{f}_2^r \notag
\end{align}
The partial derivatives in the right hand side above can be evaluated for $k=1,2$ as:
\begin{align}
%\dfrac{\partial \hat{f}_2^i}{\partial x_r} &= \delta_{ir}\left(f_2^i(x) - \bar{f}_2(x)\right) + x_i\dfrac{\partial}{\partial x_r}\left(f_2^i(x) - \bar{f}_2(x)\right)  \text{, and}\notag\\
\dfrac{\partial \hat{f}_k^i}{\partial x_r} &= \delta_{ir}\left(f_k^i(x) - \bar{f}_k(x)\right) + x_i\dfrac{\partial}{\partial x_r}\left(f_k^i(x) - \bar{f}_k(x)\right) \ \text{with} \notag\\
\dfrac{\partial}{\partial x_r}\left(f_k^i(x) - \bar{f}_k(x)\right) &= \dfrac{\partial f_k^i(x)}{\partial x_r} - f_k^r(x) - \sum\limits_{l=1}^{n}x_l \dfrac{\partial f_k^l(x)}{\partial x_r} . \notag
\end{align}
Substituting the above in the Lie bracket calculation,
\begin{align}
[\hat{f}_1,\hat{f}_2]^i &= \sum\limits_{r=1}^{n} \left  \{ \left[ \delta_{ir}\left(f_2^i(x) - \bar{f}_2(x)\right) + x_i \left(\dfrac{\partial f_2^i(x)}{\partial x_r} - f_2^r(x) - \sum\limits_{l=1}^{n}x_l \dfrac{\partial f_2^l}{\partial x_r}\right) \right] x_r  \left(f_1^r(x) - \bar{f}_1(x)\right)   \right \} \notag\\
&- \sum\limits_{r=1}^{n}\left\{\left[ \delta_{ir}\left(f_1^i(x) - \bar{f}_1(x)\right) + x_i \left(\dfrac{\partial f_1^i(x)}{\partial x_r} - f_1^r(x) - \sum\limits_{l=1}^{n}x_l \dfrac{\partial f_1^l}{\partial x_r}\right) \right]  x_r \left(f_2^r(x) - \bar{f}_2(x)\right)  \right \} \notag\\
\implies &[\hat{f}_1,\hat{f}_2]^i =\sum\limits_{r=1}^{n}\left \{ \delta_{ir}\left(f_2^i(x) - \bar{f}_2(x)\right) x_r \left(f_1^r(x) - \bar{f}_1(x)\right) \right \} \notag\\
&+ x_i \sum\limits_{r=1}^{n}\left \{  \left(\dfrac{\partial f_2^i(x)}{\partial x_r} - f_2^r(x) - \sum\limits_{l=1}^{n}x_l \dfrac{\partial f_2^l(x)}{\partial x_r}\right)  x_r  \left(f_1^r(x) - \bar{f}_1(x)\right)   \right \} \notag\\
&- \sum\limits_{r=1}^{n}\left \{ \delta_{ir}\left(f_1^i(x) - \bar{f}_1(x)\right)  x_r  \left(f_2^r(x) - \bar{f}_2(x)\right) \right \} \notag
\end{align}
\begin{align}
&- x_i \sum\limits_{r=1}^{n}\left \{  \left(\dfrac{\partial f_1^i(x)}{\partial x_r} - f_1^r(x) - \sum\limits_{l=1}^{n}x_l \dfrac{\partial f_1^l(x)}
{\partial x_r}\right)  x_r \left(f_2^r(x) - \bar{f}_2(x)\right)   \right \} \notag\\
\implies &[\hat{f}_1,\hat{f}_2]^i =\sum\limits_{r=1}^{n}\left \{ \delta_{ir}\left(f_2^i(x) - \bar{f}_2(x)\right) x_r \left(f_1^r(x) - \bar{f}_1(x)\right) \right \} \notag\\
&+ x_i \sum\limits_{r=1}^{n}\left \{  \left(\dfrac{\partial f_2^i(x)}{\partial x_r} - f_2^r(x) - \sum\limits_{l=1}^{n}x_l \dfrac{\partial f_2^l(x)}{\partial x_r}\right)  x_r  \left(f_1^r(x) - \bar{f}_1(x)\right)   \right \} \notag\\
&- \sum\limits_{r=1}^{n}\left \{ \delta_{ir}\left(f_1^i(x) - \bar{f}_1(x)\right)  x_r  \left(f_2^r(x) - \bar{f}_2(x)\right) \right \} \notag\\
&- x_i \sum\limits_{r=1}^{n}\left \{  \left(\dfrac{\partial f_1^i(x)}{\partial x_r} - f_1^r(x) - \sum\limits_{l=1}^{n}x_l \dfrac{\partial f_1^l(x)}{\partial x_r}\right)  x_r \left(f_2^r(x) - \bar{f}_2(x)\right)   \right \} \notag
\end{align}

Note that the first and third summations in the expression above cancel out since
\begin{align}
&\sum\limits_{r=1}^{n}\left \{ \delta_{ir}\left(f_2^i(x) - \bar{f}_2(x)\right)  x_r \left(f_1^r(x) - \bar{f}_1(x)\right) \right \} - \sum\limits_{r=1}^{n}\left \{ \delta_{ir}\left(f_1^i(x) - \bar{f}_1(x)\right) x_r  \left(f_2^r(x) - \bar{f}_2(x)\right) \right \} \notag\\
&= \left[\left(f_2^i(x) - \bar{f}_2(x)\right)x_i\left(f_1^i(x) - \bar{f}_1(x)\right)\right] - \left[\left(f_1^i(x) - \bar{f}_1(x)\right)x_i\left(f_2^i(x) - \bar{f}_2(x)\right)\right] =0 . \notag
\end{align}
Using this observation to simplify the calculation, we get:
\begin{align}
[\hat{f}_1,\hat{f}_2]^i &=  x_i \sum\limits_{r=1}^{n}\left \{  \left(\dfrac{\partial f_2^i(x)}{\partial x_r} - f_2^r(x) - \sum\limits_{l=1}^{n}x_l \dfrac{\partial f_2^l(x)}{\partial x_r}\right)  x_r  \left(f_1^r(x) - \bar{f}_1(x)\right)   \right \} \notag\\
&- x_i \sum\limits_{r=1}^{n}\left \{  \left(\dfrac{\partial f_1^i(x)}{\partial x_r} - f_1^r(x) - \sum\limits_{l=1}^{n}x_l \dfrac{\partial f_1^l(x)}{\partial x_r}\right)  x_r \left(f_2^r(x) - \bar{f}_2(x)\right)   \right \} \notag
\end{align}
Taking cue from the terms that contain summation over $l$ which appear to be like averages, we rearrange the RHS as follows:
\begin{align}
[\hat{f}_1,\hat{f}_2]^i &=  x_i \left[ \sum\limits_{r=1}^{n}x_r\left \{\dfrac{\partial f_2^i(x)}{\partial x_r}\left(f_1^r(x) - \bar{f}_1(x)\right)  -  \dfrac{\partial f_1^i(x)}{\partial x_r}  \left(f_2^r(x) - \bar{f}_2(x)\right)   \right \}\right] \notag\\
&- x_i  \left[ \sum\limits_{r=1}^{n}x_r\left \{ \sum\limits_{l=1}^{n} x_l \left( \dfrac{\partial f_2^l(x)}{\partial x_r}\left(f_1^r(x) - \bar{f}_1(x)\right)  -  \dfrac{\partial f_1^l(x)}{\partial x_r}  \left(f_2^r(x) - \bar{f}_2(x)\right) \right)  \right \} \right] \notag\\
&- x_i \left[ \sum\limits_{r=1}^{n}\left\{ f_2^r(x)x_r\left( f_1^r(x) - \bar{f}_1(x)\right)  -  f_1^r(x)x_r  \left(f_2^r(x) - \bar{f}_2(x)\right) \right \} \right]  \notag
\end{align}
The last term in the above expression vanishes since
\begin{align}
& \sum\limits_{r=1}^{n}\left \{f_2^r(x)x_r\left(f_1^r(x) - \bar{f}_1(x)\right)  -  f_1^r(x)x_r  \left(f_2^r(x) - \bar{f}_2(x)\right) \right \} \notag\\
&= \sum\limits_{r=1}^{n}x_r\left[f_2^r(x)f_1^r(x) - f_2^r(x)f_1^r(x) \right] - \sum\limits_{r=1}^{n}\left[x_r f_2^r(x)\bar{f}_1(x) - x_r\bar{f}_1(x)f_1^r(x) \right] \notag\\
&=0 - \bar{f}_1(x)\left( \sum\limits_{r=1}^{n} x_r f_2^r(x)\right)  + \bar{f}_2(x)\left(\sum\limits_{r=1}^{n}x_r f_1^r(x) \right) \notag\\
&= -\bar{f}_1(x)\bar{f}_2(x) + \bar{f}_2(x)\bar{f}_1(x) = 0. \notag
\end{align}
After exchanging the order of summation in the double summation over $r$ and $l$ in the remaining terms of $[\hat{f}_1,\hat{f}_2]^i$, we get:
\begin{align}
[\hat{f}_1,\hat{f}_2]^i &= x_i \left[ \sum\limits_{r=1}^{n}x_r\left \{\dfrac{\partial f_2^i(x)}{\partial x_r}\left(f_1^r(x) - \bar{f}_1(x)\right)  -  \dfrac{\partial f_1^i(x)}{\partial x_r}  \left(f_2^r(x) - \bar{f}_2(x)\right)   \right \}\right] \notag\\
&- x_i  \left[ \sum\limits_{l=1}^{n}x_l\left \{ \sum\limits_{r=1}^{n} x_r \left( \dfrac{\partial f_2^l(x)}{\partial x_r}\left(f_1^r(x) - \bar{f}_1(x)\right)  -  \dfrac{\partial f_1^l(x)}{\partial x_r}  \left(f_2^r(x) - \bar{f}_2(x)\right) \right)  \right \} \right] \notag\\
&= x_i\left(f_{\left\{1,2\right\}}^i(x) - \bar{f}_{\left\{1,2\right\}}(x)\right) = \hat{f}_{\left\{1,2\right\}}^i \label{lb}
\end{align}
where $f_{\left\{1,2\right\}}^i(x) = \sum\limits_{r=1}^{n}x_r\left \{\dfrac{\partial f_2^i(x)}{\partial x_r}\left(f_1^r(x) - \bar{f}_1(x)\right)  -  \dfrac{\partial f_1^i(x)}{\partial x_r}  \left(f_2^r(x) - \bar{f}_2(x)\right)   \right \}$. 
Thus we see that the Jacobi-Lie bracket $\left[ \hat{f}_1,\hat{f}_2 \right] = \hat{f}_{\left\{1,2\right\}}$ and hence by (\ref{liediff}), the commutation of the Lie differentiation operators satisfies
\begin{align}
\left[X_{f_1},X_{f_2}\right] = X_{f_{\left\{1,2\right\}}} \label{liediffrel}
\end{align}
This suggests a bracket on fitness maps and precisely such a bracket is defined in section \ref{lastruct}. 
\end{proof}
Equation (\ref{lb}) highlights an interesting property: the family of replicator vector fields is closed under the Jacobi-Lie bracket, with the fitness $f_{\left\{1,2\right\}}$ of the resultant vector field derived from fitnesses of the constituent vector fields. In general, families of vector fields are not closed under bracketing. For example, it is well known that Hamiltonian vector fields and divergence-free vector fields are closed under the Jacobi-Lie bracket, while gradient vector fields are not. Theorem \ref{closuretheorem} provides a new family where such closure property holds.

\section{Lie algebra structure of fitness maps\label{lastruct}}
Let $\mathcal{A}$ denote the commutative, associative algebra of all fitness maps from $\Delta^{n-1}$ to $\mathbb{R}^n$ with the multiplication operation $(\cdot)$ defined component-wise. That is, for two fitness maps $f$ and $g$, $\left(f \cdot g \right)^i = f^i g^i \ \forall \ i$. We define a bracket operation, termed the replicator bracket, as a map that takes two elements of the fitness algebra to produce another element of $\mathcal{A}$. 
\begin{definition} 
The replicator bracket $\left \{\cdot,\cdot \right \}_R$ (or $R$ - bracket) is defined as the map
\begin{align}
\left\{\cdot,\cdot\right\}_R : \mathcal{A} \times \mathcal{A} &\longrightarrow \mathcal{A} \notag\\
\left(f,g\right) &\longrightarrow \left\{f,g\right\}_R  = \dfrac{\partial g}{\partial x}\hat{f} - \dfrac{\partial f}{\partial x}\hat{g} \notag
\end{align}
and the components of the bracket are given by:
\begin{align}
\left\{f,g\right\}^i_R &= \dfrac{\partial g^i}{\partial x}\hat{f} - \dfrac{\partial f^i}{\partial x} \hat{g} = X_f g^i - X_g f^i \label{repbracket}
\end{align}
\end{definition}
With this definition, it can be seen that in (\ref{lb}),
\begin{align}
f_{\left\{1,2\right\}} = \left\{f_1,f_2\right\}_R \label{bracketmap}
\end{align} so that the Jacobi-Lie bracket of the replicator vector fields takes the form
\begin{align}
[\hat{f}_1,\hat{f}_2] &= \rwh{\left\{f_1,f_2\right\}_R} \label{f12}
\end{align}
As an example, let $A = [a_{ij}]$ and $B = [b_{ij}]$ be two matrices defining generator equations (\ref{arrhenius}) on the simplex. Then, the $R -$ bracket of the two associated replicator fitness maps $f_A = \Lambda^{-1}(x)Ax$ and $f_B = \Lambda^{-1}(x)Bx$ is given by:
\begin{align}
\left\{f_A,f_B\right\}_R^i &= \dfrac{\partial f_B^i}{\partial x}\hat{f}_A - \dfrac{\partial f_A^i}{\partial x}\hat{f}_B \notag\\
&= \dfrac{\partial f_B^i}{\partial x} Ax - \dfrac{\partial f_A^i}{\partial x} Bx 
\end{align}
The partial derivatives can be evaluated to be:
\begin{align}
\dfrac{\partial f_A^i}{\partial x_k} =\dfrac{1}{x_i}\dfrac{\partial (Ax)^i}{\partial x_k} - \delta_{ik}\dfrac{(Ax)^i}{x_i^2}, \ \ \dfrac{\partial f_B^i}{\partial x_k} = \dfrac{1}{x_i}\dfrac{\partial (Bx)^i}{\partial x_k} - \delta_{ik}\dfrac{(Bx)^i}{x_i^2}
\end{align}
Substituting this in the bracket calculations, we get: 
\begin{align}
&\dfrac{\partial f_B^i}{\partial x} Ax - \dfrac{\partial f_A^i}{\partial x} Bx \notag\\
&= \sum\limits_{k=1}^{n}\left(\dfrac{1}{x_i}\dfrac{\partial (Bx)^i}{\partial x_k} - \delta_{ik}\dfrac{(Bx)^i}{x_i^2}\right)(Ax)^k -  \sum\limits_{k=1}^{n}\left(\dfrac{1}{x_i}\dfrac{\partial (Ax)^i}{\partial x_k} - \delta_{ik}\dfrac{(Ax)^i}{x_i^2}\right)  (Bx)^k 
\end{align}
The terms containing $\delta_{ij}$ cancel and we get:
\begin{align}
\left\{f_A,f_B\right\}_R^i &= \dfrac{1}{x_i}\left( [b_{i1} \ b_{i2} \ \hdots \ b_{in}]Ax - [a_{i1} \ a_{i2} \ \hdots \ a_{in}]Bx\right) \ \ \implies \notag\\
\left\{f_A,f_B\right\}_R &= \Lambda^{-1}(x)(BA - AB)x = \Lambda^{-1}(x) [B,A]x
\end{align}
where $\Lambda^{-1}(x) = diag(\frac{1}{x_1}, \frac{1}{x_2},\hdots,\frac{1}{x_n})$. Notice that substituting back in the replicator equations, this takes the form of the generator equation obtained by taking the matrix commutator of $A$ and $B$ in the associated generator equations (\ref{arrhenius}). That is, we have shown that:
\begin{align}
\dot{x} &= \Lambda(x) (\left\{f_A,f_B\right\}_R - (x^T \left\{f_A,f_B\right\}_R )\textbf{e} ) \notag\\
&= \Lambda(x) \left(\Lambda^{-1}(x)(BA - AB)x - (x^T\Lambda^{-1}(x)(BA - AB)x) \textbf{e}\right)  \notag\\ 
&= \Lambda(x) \left(\Lambda^{-1}(x)(BA - AB)x\right) - \Lambda(x) \left((x^T\Lambda^{-1}(x)(BA - AB)x) \textbf{e}\right)  \notag\\ 
&= \Lambda(x) \left(\Lambda^{-1}(x)(BA - AB)x\right) -  \Lambda(x)\left((\textbf{e}^T(BA - AB)x) \textbf{e}\right)  \notag\\ 
&= (BA - AB)x \tag{$\because$ matrices $A$ and $B$ have vanishing column sums.}
\end{align}

\begin{theorem}
The set $\mathcal{A}$ together with the replicator bracket $\left \{\cdot,\cdot \right \}_R$ constitutes a Lie algebra with an ideal $\mathcal{I}$ given by component-wise uniform fitness maps:
\begin{align}
\mathcal{I} = \left\{f_\alpha = \alpha \textbf{\emph{e}}, \alpha \in C^{\infty}(\Delta^{n-1}) \right\}
\end{align}
where $C^{\infty}(\Delta^{n-1})$ is the space of smooth real valued functions on the probability simplex.
\end{theorem}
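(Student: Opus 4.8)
The plan is to verify the three Lie-algebra axioms for $\{\cdot,\cdot\}_R$ and then to check that $\mathcal{I}$ is absorbing under bracketing. Bilinearity and skew-symmetry come essentially for free from the defining formula $\{f,g\}_R = \frac{\partial g}{\partial x}\hat{f} - \frac{\partial f}{\partial x}\hat{g}$: the assignment $f\mapsto\hat{f}$ is linear by (\ref{fhat}) and differentiation is linear, so the bracket is bilinear, while interchanging $f$ and $g$ manifestly negates the expression, giving $\{g,f\}_R=-\{f,g\}_R$. This leaves the Jacobi identity as the one substantive step.

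For Jacobi, I would avoid a brute-force expansion of the component formula and instead lean on the operator identity already available from Theorem \ref{closuretheorem}, namely (\ref{liediffrel}): $X_{\{f,g\}_R}=[X_f,X_g]=X_fX_g-X_gX_f$ as differential operators on $C^\infty(\Delta^{n-1})$. Writing the bracket componentwise as in (\ref{repbracket}), I would expand the $i$-th component of $\{f,\{g,h\}_R\}_R$ as
\begin{align}
\{f,\{g,h\}_R\}_R^i &= X_f\{g,h\}_R^i - X_{\{g,h\}_R} f^i \notag \\
&= X_f X_g h^i - X_f X_h g^i - X_g X_h f^i + X_h X_g f^i, \notag
\end{align}
where the first equality is (\ref{repbracket}) and the second uses (\ref{liediffrel}) to rewrite $X_{\{g,h\}_R}$ as the operator commutator. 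Cyclically permuting $(f,g,h)$ produces the remaining two terms of the Jacobi sum, each comprising four terms of the form $X_a X_b c^i$. Upon adding the three contributions, each of the six possible orderings $X_a X_b c^i$ occurs exactly twice with opposite signs, so the sum vanishes identically. Thus Jacobi reduces to a bookkeeping cancellation whose only real input is the commutator relation (\ref{liediffrel}). I would stress that injectivity of $f\mapsto X_f$ is \emph{not} used --- indeed it fails, since component-wise uniform maps lie in the kernel by Remark \ref{compunif} --- so the identity genuinely has to be checked at the level of the componentwise lift rather than merely on the image vector fields; the computation above does exactly that.

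Finally, to show $\mathcal{I}$ is an ideal I would take an arbitrary $f\in\mathcal{A}$ and $f_\alpha=\alpha\mathbf{e}\in\mathcal{I}$. By Remark \ref{compunif}, $\hat{f}_\alpha=0$, so the second term of the bracket drops and $\{f,f_\alpha\}_R=\frac{\partial f_\alpha}{\partial x}\hat{f}$. Since every component of $f_\alpha$ equals the same scalar $\alpha$, its Jacobian has identical rows $\partial\alpha/\partial x_r$, whence
\begin{align}
\{f,f_\alpha\}_R^i = \sum_{r=1}^n \frac{\partial\alpha}{\partial x_r}\hat{f}^r = X_f\alpha \notag
\end{align}
independently of $i$. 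Hence $\{f,f_\alpha\}_R=(X_f\alpha)\mathbf{e}=f_\beta$ with $\beta=X_f\alpha\in C^\infty(\Delta^{n-1})$, which lies in $\mathcal{I}$; skew-symmetry then yields $\{\mathcal{I},f\}_R\subseteq\mathcal{I}$ as well, so $\mathcal{I}$ is an ideal. The main obstacle is the Jacobi identity, but it is defused by the commutator identity (\ref{liediffrel}) inherited from Theorem \ref{closuretheorem}; everything else is routine.
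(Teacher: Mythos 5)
Your proposal is correct and follows essentially the same route as the paper: linearity and skew-symmetry read off from the defining formula, the Jacobi identity reduced to the operator commutator relation (\ref{liediffrel}) followed by the twelve-term cancellation, and the ideal property obtained from $\hat{f}_\alpha=0$ forcing $\{f,f_\alpha\}_R=(X_f\alpha)\mathbf{e}\in\mathcal{I}$ (the paper writes the equivalent $\{f_\alpha,g\}_R=-X_g\alpha\,\mathbf{e}$). Your added remark that injectivity of $f\mapsto X_f$ is not needed is a nice clarification but does not change the argument.
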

\begin{proof}
To prove the result, we show that the replicator bracket satisfies the three requisite axioms component-wise. 
\begin{itemize}
\item[(i). ]\textit{Linearity. }Consider $a$, $b \ \in \mathbb{R}$. Then, for fitness maps $f_k, k=1,2$ and $g$,
\begin{align}
&\left\{a f_1 + b f_2,g\right\}^i_R\left(x\right)\notag\\ 
&= X_{a f_1+ b f_2}g^i - X_g\left(a f_1 + b f_2\right)^i \notag\\
&= X_{a f_1}g^i + X_{b f_2}g^i - a X_gf_1^i - bX_gf_2^i \tag{by (\ref{liediff})} \notag\\
&= aX_{f_1}g^i + bX_{f_2}g^i - a X_gf_1^i - bX_gf_2^i \notag\\
&= a\left\{f_1,g\right\}^i_R + b\left\{f_2,g\right\}^i_R \tag{by regrouping terms} \notag
\end{align} 
\item[(ii). ]\textit{Skew-symmetry. }For fitness maps $f$ and $g$,
\begin{align}
\left\{g,f\right\}^i_R\left(x\right) &=  X_g f^i(x) - X_f g^i(x) \tag{by (\ref{repbracket})}\notag\\
&= -\left(X_f g^i(x) - X_g f^i (x) \right) \notag\\
&= -\left\{f,g\right\}^i_R\left(x\right) \notag
\end{align}
\item[(iii). ]\textit{Jacobi identity. }Let $f,g,h$ denote fitness maps. We want to show that 
\begin{align}
\left\{f,\left\{g,h\right\}_R \right\}^i_R\left(x\right) + \left\{g,\left\{h,f\right\}_R \right\}^i_R\left(x\right) + \left\{h,\left\{f,g\right\}_R \right\}^i_R\left(x\right) =0 \notag
\end{align}
Consider the first term:
\begin{align}
&\left\{f,\left\{g,h\right\}_R \right\}^i_R\left(x\right) \notag\\
&= X_f\left\{g,h\right\}_R^i - X_{\left\{g,h\right\}_R}f^i \tag{by definition} \notag\\
&= X_f\left(X_g h^i - X_h g^i\right) - \left[X_g,X_h\right]f^i \tag{by (\ref{liediffrel}), (\ref{repbracket}) and (\ref{bracketmap})}\notag\\
&= X_fX_g h^i - X_fX_h g^i - X_gX_h f^i + X_hX_g f^i \notag
\end{align}
Similarly, we get that:
\begin{align}
&\left\{g,\left\{h,f\right\}_R \right\}^i_R\left(x\right) = X_gX_h f^i - X_gX_f h^i - X_hX_f g^i + X_fX_h g^i, \notag\\
&\left\{h,\left\{f,g\right\}_R \right\}^i_R\left(x\right) = X_hX_f g^i - X_hX_g f^i - X_fX_g h^i + X_gX_f h^i, \notag
\end{align}
Adding up the three terms we get,
\begin{align}
&\left\{f,\left\{g,h\right\}_R \right\}^i_R\left(x\right) + \left\{g,\left\{h,f\right\}_R \right\}^i_R\left(x\right) + \left\{h,\left\{f,g\right\}_R \right\}^i_R\left(x\right) \notag\\
&= X_fX_g h^i - X_fX_h g^i - X_gX_h f^i + X_hX_g f^i +  X_gX_h f^i - X_gX_f h^i - X_hX_f g^i + X_fX_h g^i \notag\\
&+ X_hX_f g^i - X_hX_g f^i - X_fX_g h^i + X_gX_f h^i = 0.\notag
\end{align}
\end{itemize}
Hence, $(\mathcal{A},\left\{\cdot,\cdot\right\}_R)$ is a Lie algebra. From remarks (\ref{liediffdef}) and (\ref{compunif}), it is clear that $\mathcal{I}$ is an abelian sub-algebra of the Lie algebra $(\mathcal{A},\left\{\cdot,\cdot\right\}_R)$. Further, consider $f_\alpha = \alpha\textbf{e}, \alpha \in C^{\infty}(\Delta^{n-1})$ and $g \in \mathcal{A}$. Then,
\begin{align}
\left\{f_\alpha,g\right\}^i_R &= X_{f_\alpha}g^i - X_g f_\alpha^i \notag\\
&= -X_g \alpha \tag{because $X_{f_\alpha} = 0, \text{as} \ f_\alpha \in \mathcal{I}$} \notag\\
\implies \left\{f_\alpha,g\right\}_R &= -X_g\alpha\textbf{e} \in \mathcal{I} \label{ideal}
\end{align}
This shows that the replicator bracket of a component-wise uniform fitness with an element of the Lie algebra $\mathcal{A}$ produces another component-wise fitness map. Hence, the set of all component-wise uniform fitness maps denoted by the set $\mathcal{I}$ is an ideal of the Lie algebra. This concludes the proof. 
\end{proof}
\begin{remark}
We calculate the center $\mathcal{C}$ of $\mathcal{A}$. Since elements of $\mathcal{C}$ commute with all the elements of $\mathcal{A}$, we have for any $f \in \mathcal{A}$ and $c \in \mathcal{C}$,
\begin{align}
\left\{c,f\right\}_R = \dfrac{\partial f}{\partial x}\hat{c} - \dfrac{\partial c}{\partial x}\hat{f} = 0
\end{align}
Choosing $f = \alpha \textbf{\emph{e}}$, we require
\begin{align}
\left\{c,f\right\}_R = \left(\sum_j \dfrac{\partial \alpha}{\partial x_j}\hat{c}^j\right)\textbf{\emph{e}} = 0 \tag{by (\ref{ideal})}
\end{align}
For $k=1,\hdots,n$, $\alpha = x_k$, this implies $\hat{c}^k = 0$. Therefore, by remark (\ref{compunifrev}), elements of the center are component-wise uniform fitness maps. That is, $\mathcal{C} \subset \mathcal{I}$ and elements of $\mathcal{C}$ are given by $c =\eta(x)\textbf{\emph{e}}$, where $\eta(x) \in C^{\infty}(\Delta^{n-1})$. Further for any $f \in \mathcal{A}$, the bracket calculation reduces to
\begin{align}
\left\{c,f\right\}_R &= -\dfrac{\partial c}{\partial x}\hat{f} = - \left(\dfrac{\partial \eta(x)}{\partial x}\hat{f}\right) \textbf{\emph{e}} =  0. \implies \notag\\
\dfrac{\partial \eta(x)}{\partial x}\hat{f} &= 0 \ \forall f \in \mathcal{A} \implies  \notag\\
\langle \hat{g}, \hat{f} \rangle_{FRS} &= 0 \ \forall \ f \in \mathcal{A} \label{ortho}
\end{align}
where $g$ is the \emph{derived} fitness with components $g^i = \dfrac{\partial \eta(x)}{\partial x_i} \ \forall \ i$.  Since (\ref{ortho}) holds for all fitness maps in $\mathcal{A}$, this implies $\hat{g} = 0$ or equivalently that $g$ is a component-wise uniform fitness. For $c=\eta(x) \textbf{\emph{e}}$ this condition is:
\begin{align}
\dfrac{\partial \eta(x)}{\partial x_1} = \dfrac{\partial \eta(x)}{\partial x_2} = \hdots = \dfrac{\partial \eta(x)}{\partial x_n} \label{eta}
 \end{align}
Therefore, any component-wise uniform fitness is an element of the center if it satisfies (\ref{eta}) by remark (\ref{compunifrev}). We note that functions of the form 
\begin{align}
\eta(x) = \lambda\Psi(\sum_k x_k) + \lambda_0 \label{beta}
\end{align}
where $\Psi: \mathbb{R} \rightarrow \mathbb{R}$ is a function, $\lambda, \lambda_0 \in \mathbb{R}$ satisfy (\ref{eta}). However, on the simplex, $\Psi(\sum_k x_k) = \Psi(1)$, a constant. Therefore, the candidate solutions (\ref{beta}) produce elements of $\mathcal{C}$ which are of the form $\mu \textbf{\emph{e}}$, where $\mu \in \mathbb{R}$. 
\end{remark}
\begin{remark} The replicator bracket is not a derivation and hence, is not a Poisson bracket on the commutative algebra $\mathcal{A}$\cite{29}. This can be seen from the following calculations for fitness maps $f, g, h$:
\begin{align}
\left\{f\cdot g,h\right\}^i_R &= X_{f\cdot g}h^i - X_h {\left(f\cdot g\right)^i} \notag\\
&= \sum\limits_{r=1}^{n}x_r\left[ \dfrac{\partial h^i}{\partial x_r}\left(f^rg^r - \overline{f\cdot g} \right) - \dfrac{\partial (f^i g^i)}{\partial x_r}\left(h^r - \bar{h} \right) \right] \notag\\
&\neq f^i \left\{ g,h\right\}^i_R + g^i \left\{f,h\right\}^i_R , \ \text{in general}. \notag
\end{align}
Consequently, $\left(\mathcal{A}, \left\{\cdot,\cdot \right\}_R\right)$ is not a Poisson bracket algebra. However, there are two special cases for which such a relationship holds. When $f^i = \alpha \ \forall i = 1,\cdots,n$, we get:
\begin{align}
\left\{f\cdot g,h\right\}^i_R &= \sum\limits_{r=1}^{n}x_r\left[ \dfrac{\partial h^i}{\partial x_r}\left(f^rg^r - \overline{f\cdot g} \right) - \dfrac{\partial (f^i g^i)}{\partial x_r}\left(h^r - \bar{h} \right) \right] \notag\\
&= \alpha\sum\limits_{r=1}^{n}x_r\left[ \dfrac{\partial h^i}{\partial x_r}\left(g^r - \bar{g} \right)\right] - \sum\limits_{r=1}^{n}x_r\left[g^i\dfrac{\partial \alpha}{\partial x_r}\left(h^r - \bar{h} \right) + \alpha \dfrac{\partial g^i}{\partial x_r}\left(h^r - \bar{h} \right)  \right]  \notag\\
&= \alpha\left\{g,h\right\}^i_R - g^i\sum\limits_{r=1}^{n}x_r\left[\dfrac{\partial \alpha}{\partial x_r}\left(h^r - \bar{h} \right) \right] \notag\\
&= \alpha\left\{g,h\right\}^i_R + g^i\left\{f,h\right\}^i_R \tag{after adding $g^i X_f h^i$ to RHS, since $X_f = 0$.} \notag
\end{align}
Furthermore, when $h$ is a frequency independent fitness map, $\left\{f\cdot g,h\right\}^i_R  = f^i\left\{g,h\right\}^i_R + g^i\left\{f,h\right\}^i_R$.  Therefore, for these special cases, we have that:
\begin{align}
\left\{f\cdot g,h\right\}_R &= f\left\{g,h\right\}_R + g \left\{f,h\right\}_R
\end{align}
\end{remark}

\section{Lie algebra homomorphism from fitness maps to replicator vector fields\label{lastruct1}}

We say that a smooth manifold $P$ is a Poisson manifold \cite{30} if the commutative algebra $C^{\infty}(P)$ of smooth real-valued functions on $P$ carries a Lie algebra structure
\begin{align}
\left\{\cdot,\cdot\right\}_P : C^{\infty}(P) \times C^\infty(P) &\rightarrow  C^\infty(P) \notag\\
(\phi,\psi) &\mapsto \left\{\phi,\psi\right\}_P
\end{align}
satisfying the derivation property (Leibnitz rule):
\begin{align}
\left\{\phi\cdot \psi,h\right\}_P &= \phi\cdot\left\{\psi,h\right\}_P + \left\{\phi,h\right\}_P\cdot \psi
\end{align}
for all $\phi,\psi,h \in C^{\infty}(P)$. For each $h \in C^{\infty}(P)$, one assigns the hamiltonian vector field $X_h$ on $P$ by requiring
\begin{align}
X_h \phi = \left\{\phi,h\right\}_P \ \forall \ \phi \in C^{\infty}(P)
\end{align}
Denote this assignment $\text{Ham}: C^{\infty}(P) \rightarrow \mathcal{X}(P)$. Moreover from
\begin{align}
X_{\left\{h_1,h_2\right\}_P} &= - \left[X_{h_1},X_{h_2}\right]
\end{align}
it follows that Ham is an anti-homomorphism of Lie algebras. For example, when $P = \mathbb{R}^{2n}$ with canonical coordinates $z = (q_1,q_2,\hdots,q_n,p_1,p_2,\hdots,p_n)$, we have the canonical Poisson bracket 
\begin{align}
\left\{h_1,h_2\right\}_{canonical} &= \sum\limits_{i=1}^{n}\left(\dfrac{\partial h_1}{\partial q_i}\dfrac{\partial h_2}{\partial p_i} - \dfrac{\partial h_1}{\partial p_i} \dfrac{\partial h_2}{\partial q_i}\right)
\end{align}
and $X_h = J \nabla_z h$, where 
\begin{align}
J = \left[\begin{array}{cc}
0 & \mathbb{I} \\
-\mathbb{I} & 0
\end{array}\right]
\end{align}
for the canonical Poisson bracket, with $\mathbb{I}, 0$ respectively denoting identity and zero matrices. \\
\begin{figure}[t]
\centering
\includegraphics[clip, trim=1cm 4.5cm 1cm 4.5cm, width=\linewidth]{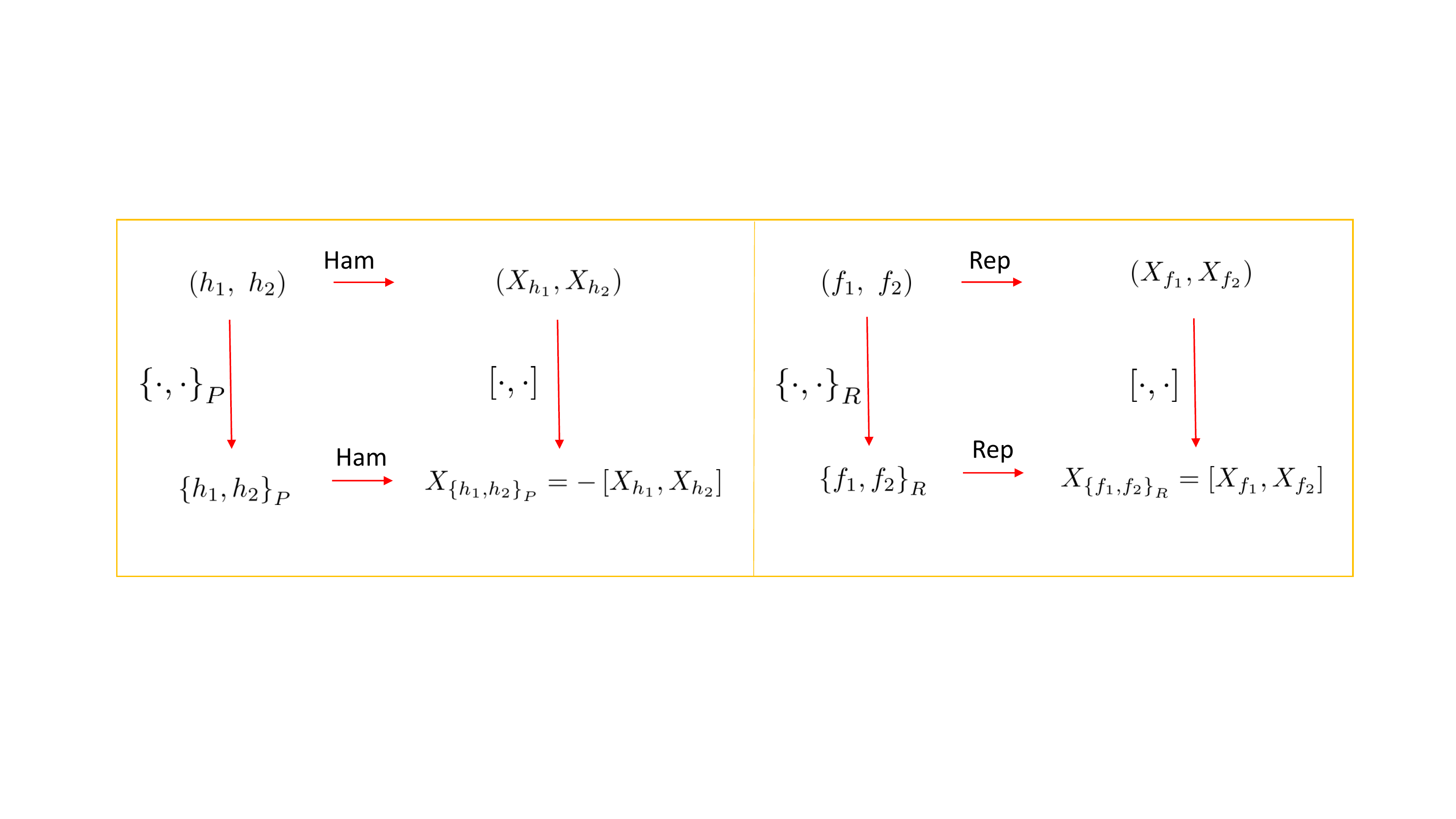}
\caption{Anti-homomorphism property between Poisson bracket algebra of Hamiltonian functions and Lie algebra of Hamiltonian vector fields is depicted in left panel, homomorphism between Lie algebra of fitness maps and Lie algebra of replicator vector fields is on the right.}\label{homodiag}
\end{figure}
We now note that there is a parallel between classical mechanics and evolutionary games. Just as Ham is an anti-homomorphism from the Poisson bracket algebra to the Lie algebra of Hamiltonian vector fields, the association of a replicator vector field to a fitness map is a homomorphism of Lie algebras. Denoting $\mathfrak{X}_R(\Delta^{n-1})$ as the Lie algebra of all replicator vector fields, we state: % and c
\begin{theorem}
Identifying $X_f$ with $\hat{f}$ in (\ref{fhat}) and (\ref{liediff}), the map
\begin{align}
\emph{Rep} : \mathcal{A} &\rightarrow \mathfrak{X}_R(\Delta^{n-1})\notag\\
f &\rightarrow X_f \notag
\end{align}
assigning to each fitness map a replicator vector field is a Lie algebra homomorphism with kernel given by the ideal $\mathcal{I}$ consisting of component-wise uniform fitness maps. \label{homtheorem}
\end{theorem}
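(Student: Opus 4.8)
The plan is to verify directly that $\text{Rep}$ satisfies the two defining properties of a Lie algebra homomorphism — $\mathbb{R}$-linearity and preservation of the bracket — and then to identify its kernel, drawing almost entirely on results already established. First I would check linearity. From (\ref{fhat}) we have $X_f = \hat f = \Lambda(x)\bigl(f - \bar f\,\mathbf{e}\bigr)$ with $\bar f = \sum_i x_i f^i$; since $\bar f$ is linear in $f$ and $\Lambda(x)$ is independent of $f$, the assignment $f \mapsto \hat f$ is manifestly $\mathbb{R}$-linear, so $\text{Rep}(a f_1 + b f_2) = a\,\text{Rep}(f_1) + b\,\text{Rep}(f_2)$ for all $a,b \in \mathbb{R}$.

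Bracket preservation is the heart of the statement, but the required computation has in fact already been carried out in the proof of Theorem \ref{closuretheorem}. Combining the identity (\ref{liediffrel}), namely $[X_{f_1},X_{f_2}] = X_{f_{\{1,2\}}}$, with the identification (\ref{bracketmap}), namely $f_{\{1,2\}} = \{f_1,f_2\}_R$, gives at once
\begin{align}
\text{Rep}\bigl(\{f_1,f_2\}_R\bigr) = X_{\{f_1,f_2\}_R} = [X_{f_1}, X_{f_2}] = [\text{Rep}(f_1), \text{Rep}(f_2)]. \notag
\end{align}
Thus $\text{Rep}$ intertwines the $R$-bracket on $\mathcal{A}$ with the Jacobi--Lie bracket, and together with linearity is a homomorphism of Lie algebras. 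I would also remark that the codomain $\mathfrak{X}_R(\Delta^{n-1})$ is genuinely a Lie algebra — closed under bracketing by Theorem \ref{closuretheorem} — and that $\text{Rep}$ is surjective onto it by the very definition of a replicator vector field as some $\hat f$.

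Finally I would compute the kernel. Under the identification of the operator $X_f$ with the vector field $\hat f$ (Remark \ref{liediffdef}), an element $f$ lies in $\ker(\text{Rep})$ precisely when $\hat f$ vanishes on $int(\Delta^{n-1})$. One inclusion comes from Remark \ref{compunif}: every component-wise uniform $f = \alpha\,\mathbf{e} \in \mathcal{I}$ has $\hat f = 0$, whence $\mathcal{I} \subseteq \ker(\text{Rep})$. The reverse inclusion is Remark \ref{compunifrev}: $\hat f = 0$ on the interior forces $f^1 = \cdots = f^n = \alpha(x)$, i.e. $f \in \mathcal{I}$. Hence $\ker(\text{Rep}) = \mathcal{I}$, which the preceding theorem already exhibits as an ideal of $(\mathcal{A},\{\cdot,\cdot\}_R)$, consistent with the general fact that kernels of Lie algebra homomorphisms are ideals.

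Since the substantive calculation is inherited from Theorem \ref{closuretheorem}, there is no serious computational obstacle here; the proof is essentially an assembly of prior results. The one point demanding care is that the kernel characterization rests on Remark \ref{compunifrev}, which holds only on the interior of the simplex, so the equality $\ker(\text{Rep}) = \mathcal{I}$ is correctly read as an interior statement, matching the phrasing of the theorem. I anticipate no further difficulty.
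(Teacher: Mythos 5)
Your proposal is correct and follows essentially the same route as the paper: bracket preservation is read off from the identities (\ref{liediffrel}), (\ref{bracketmap}), and (\ref{f12}) already established in the proof of Theorem \ref{closuretheorem}, and the kernel is identified as $\mathcal{I}$ via the two inclusions given by Remarks \ref{compunif} and \ref{compunifrev}. Your explicit check of $\mathbb{R}$-linearity and the observation about surjectivity are harmless additions that the paper leaves implicit.
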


\begin{proof}
Collecting together the calculations leading upto equations (\ref{liediffrel}) and (\ref{bracketmap}), we have already noted the homomorphism property in (\ref{f12}). Next, we consider the kernel of this map. Recall that elements of the ideal $\mathcal{I}$ of the Lie algebra of fitness maps $\mathcal{A}$ given by $f_{\alpha}(x)=\alpha(x)\textbf{e}$ satisfy $\hat{f}_\alpha =0$. Thus, the ideal $\mathcal{I}$ is contained in the kernel of Rep. Further, from remark (\ref{compunifrev}), the kernel of Rep is contained in ideal $\mathcal{I}$. This completes the proof. 
%On the other hand, if a fitness map $f$ is such that $X_{f} =0$, then
%\begin{align}
%X_{f}^i &= 0 \ \forall \ i \implies \notag\\
%x_i(f^i - \bar{f}) &= 0 \ \forall \ i \implies \notag\\
%f^1 = f^2 &= \hdots = f^n \implies \exists \ \alpha(x), \text{a function such that } \notag\\
%f^i(x) &= \alpha(x) \ \forall \ i.
%\end{align}
%Therefore, a fitness map is an element of the kernel of Rep if and only if it is an element of the ideal $\mathcal{I}$. This concludes the proof.
\end{proof}
% By definition,
%\begin{align}
%\left\{f_1,f_2\right\}^i_R &= \dfrac{\partial f_2}{\partial x}\hat{f}^i_1 - \dfrac{\partial f_1}{\partial x}\hat{f}^i_2\notag\\
%&= \sum\limits_{r=1}^{n}x_r\left \{\dfrac{\partial f_2^i(x)}{\partial x_r}\left(f_1^r(x) - \bar{f}_1(x)\right)  -  \dfrac{\partial f_1^i(x)}{\partial x_r}  \left(f_2^r(x) - \bar{f}_2(x)\right)   \right \} \label{fitbrack}
%\end{align}
%Comparing this with the bracket of the replicator vector fields in (\ref{liediffrel}), we see that
%\begin{align}
%[X_{f_1},X_{f_2}] = X_{\left\{f_1,f_2\right\}_R}
%\end{align}
%Therefore, the assignment from the Lie algebra of fitness maps to the Lie algebra of replicator vector fields preserves the bracket structure and hence is a homomorphism.  
\begin{remark}
We summarize in figure \ref{homodiag}, a comparison of \emph{Ham} and \emph{Rep}.
\end{remark}
This theorem, along with the definition of the replicator bracket, simplifies the calculation of the Jacobi-Lie bracket of two replicator vector fields. To find the Lie algebra of the vector fields generated by two replicator vector fields, one simply needs to consider the Lie algebra generated by corresponding fitness maps. The usefulness of this result is seen in section \ref{cs} on determining the controllability of a dynamics comprising replicator vector fields scaled by control inputs. Before considering such systems, we discuss an optimality principle governing general replicator dynamics in the following section.

\section{Variational principle governing simplex-preserving dynamics\label{var}}

In the previous sections, we have studied replicator dynamics on the probability simplex and noticed the Lie algebraic structure in the space of smooth fitness maps and its connections to the Lie algebra of generic vector fields $\mathcal{X}(\Delta^{n-1})$. On the other hand, the cotangent bundle $T^*(\Delta^{n-1})$ equipped with Hamiltonian functions defined on it, has a canonical Poisson bracket structure. In this section, we seek to understand how these two different structures come into contact, by considering a class of path optimization problems. Some of these ideas have been explicated in \cite{13}. 

Problems of optimization on the probability simplex have been widely known. Fisher's fundamental theorem of natural selection and Kimura's maximum principle \cite{12} provide insight into the optimizing nature of replicator dynamics. Such principles seek to understand questions of optimality in biology, in the space of strategies \cite{31}. Presently, we consider a variational problem initiated through the work of Y. M. Svirezhev \cite{11}, arising from a Lagrangian that is composed of two terms: the first term considers the norm (with respect to the Fisher-Rao-Shahshahani metric) of the velocity of a curve on the simplex and the second term is known in population genetics as the additive genetic fitness variance\cite{12}. Svirezhev showed that a cost functional given by the Lagrangian integrated over a small enough time duration is minimized by replicator dynamics with linear fitness \cite{32,33,34}. In his later work \cite{5}, it was shown that in addition to selection equations, certain gradient dynamics on the simplex such as migration and a special case of mutation equations also satisfy the Euler-Lagrange equations for this problem. Further consideration of this principle allows us to generalize this principle for any simplex preserving dynamics. Here too, replicator dynamics plays a special role.

To see this, recall that the simplex $\Delta^{n-1}$ has natural dynamics given by replicator vector fields (theorem \ref{genrep}) and also a natural geometry given by the Fisher-Rao-Shahshahani metric (\ref{frsmetric}). Putting these two together, we can consider problems on the simplex, akin to those in geometric mechanics, by defining a family of Lagrangians (Svirezhev family) using the following map:
\begin{align}
X \mapsto L_X: T\left(int\left(\Delta^{n-1}\right) \right)\rightarrow \mathbb{R}
\end{align}
where $X \in \mathcal{X}_R(\Delta^{n-1})$ is a distinguished replicator vector field and for $v_x \in T_x(\Delta^{n-1})$ the Svirezhev family of Lagrangians parametrized by $X$ is given as: 
\begin{align}
L_X(v_x) &= K(v_x,v_x) + K(X,X) = \langle v_x,v_x \rangle_{FRS} + \langle X,X \rangle_{FRS} \label{svifamily}
\end{align}
A key result about this family is that any solution to $X$ is an extremal of $L_X$. We state a theorem \cite{11,13}:

\begin{theorem} \label{varthm}
The replicator dynamics defined by the fitness $f(x) = [f^1(x) \ \hdots \ f^n(x)]^T$:
\begin{align}
\dot{x}_i &= x_i \left(f^i(x) - \bar{f}(x)\right) \ i=1,\hdots, n\label{reel}
\end{align}
satisfies the Euler-Lagrange equations associated with extremizing the following cost functional in the interior of the simplex:
\begin{align}
J = \int\limits_{t_0}^{t_1} \left[||\dot{x}||_{FRS}^2+ ||\hat{f}||^2_{FRS}\right]dt \label{cost}
\end{align}
\end{theorem}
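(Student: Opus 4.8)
My plan is to establish the result by a direct first-variation computation in the interior of the simplex, handling the holonomic constraint $\sum_i x_i = 1$ by admitting only variations tangent to $\Delta^{n-1}$. Write $J = \int_{t_0}^{t_1} L\,dt$ with $L = \|\dot x\|_{FRS}^2 + \|\hat f\|_{FRS}^2$, and consider $x \mapsto x + \varepsilon\eta$ with $\eta(t_0)=\eta(t_1)=0$ and $\sum_i \eta_i(t) = 0$, so that the varied curves remain on the simplex. Because every admissible $\eta$ is orthogonal to $\mathbf{e}$, the correct Euler--Lagrange condition is that the Euler--Lagrange covector $E_i = \frac{d}{dt}\frac{\partial L}{\partial \dot x_i} - \frac{\partial L}{\partial x_i}$ be \emph{independent of $i$} along the candidate curve, the common value serving as the Lagrange multiplier for the constraint. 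The goal is then to verify that inserting the replicator ansatz $\dot x_i = \hat f^i = x_i(f^i - \bar f)$ renders $E_i$ constant in $i$.

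The organizing step is the pointwise identity $L = \|\dot x - \hat f\|_{FRS}^2 + 2\langle \dot x, \hat f\rangle_{FRS}$, which splits $J$ into two functionals. The first, $\int_{t_0}^{t_1}\|\dot x - \hat f\|_{FRS}^2\,dt$, is nonnegative and vanishes identically on the replicator flow; hence the flow is a global minimizer of this piece among curves sharing its endpoints, and its first variation there is automatically zero. For the second piece I would use $\sum_i \dot x_i = 0$ to collapse $\langle \dot x, \hat f\rangle_{FRS} = \sum_i \dot x_i (f^i - \bar f) = \sum_i \dot x_i f^i$, so that only the work form $\omega = \sum_i f^i\,dx_i$ enters. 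Integrating its first variation by parts (using the vanishing endpoint conditions) reduces the total to $\delta J = -2\int_{t_0}^{t_1}\sum_i \eta_i B_i\,dt$, where $B_i = \sum_j \hat f^j\bigl(\frac{\partial f^i}{\partial x_j} - \frac{\partial f^j}{\partial x_i}\bigr)$ pairs the antisymmetric part of the fitness Jacobian with the replicator velocity, i.e.\ $B = \bigl(\frac{\partial f}{\partial x} - (\frac{\partial f}{\partial x})^{T}\bigr)\hat f$.

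The main obstacle is precisely this cross term. Since admissible variations satisfy $\sum_i \eta_i = 0$, the first variation vanishes for all such $\eta$ exactly when $B_i$ is independent of $i$, equivalently when $B$ is proportional to $\mathbf{e}$ and can be absorbed into the constraint multiplier. This is immediate when the fitness is a gradient on the simplex, $f^i = \partial \phi/\partial x_i$: then $\omega = d\phi$ is exact, the cross integral reduces to the endpoint quantity $2[\phi(x(t_1)) - \phi(x(t_0))]$, and its variation vanishes identically, which already covers Svirezhev's selection, migration, and mutation equations. Showing that $\int_{t_0}^{t_1}\sum_i \eta_i B_i\,dt$ vanishes for the broader class claimed—equivalently, that the non-gradient (antisymmetric) part of the fitness contributes only along $\mathbf{e}$ when paired with $\hat f$ subject to $\sum_j \hat f^j = 0$—is the crux, and I expect it to be the decisive step; it is exactly here that any restriction to gradient-type fitness would enter, and where the delicate interplay between the Fisher--Rao--Shahshahani metric and the zero-sum structure of the replicator velocity must be exploited.
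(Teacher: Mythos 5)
Your reduction is correct as far as it goes, but the proof is not complete, and the step you leave open is not a formality. The splitting $L=\|\dot x-\hat f\|_{FRS}^2+2\langle\dot x,\hat f\rangle_{FRS}$ is valid, the first summand is a smooth nonnegative functional attaining its global minimum $0$ on the replicator flow (hence with vanishing first variation there), and your computation of the variation of the cross term is right: everything comes down to whether $B_i=\sum_j\hat f^j\left(\frac{\partial f^i}{\partial x_j}-\frac{\partial f^j}{\partial x_i}\right)$ is independent of $i$ along the flow, so that $B$ can be absorbed into the constraint multiplier. You stop exactly there, and your closing sentence concedes it. Since the theorem is asserted for arbitrary smooth fitness, an argument that ends by naming the decisive step rather than carrying it out has a genuine gap.

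For comparison, the paper's proof (supplemental section (a)) is a direct verification: append $\lambda\left(\sum_k x_k-1\right)$ to the Lagrangian, form the Euler--Lagrange equations (\ref{el}), differentiate the replicator flow to get (\ref{xdd}), and match the two. But that match requires $\sum_k\hat f^k\frac{\partial f^k}{\partial x_i}=\sum_k\hat f^k\frac{\partial f^i}{\partial x_k}-\sum_jx_j\,X_ff^j$ for every $i$, which is precisely your condition that $B_i$ be constant in $i$; the paper passes over this identification silently. The condition holds automatically for $n=2$ (the tangent space is one-dimensional, so the antisymmetric pairing of the Jacobian with $\hat f$ against a vector proportional to $\hat f$ vanishes) and whenever the restriction of $\sum_i f^i\,dx_i$ to the simplex is closed --- constant fitness, symmetric payoff matrices, and the gradient-type selection, migration and mutation equations of Svirezhev's original setting. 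It can fail for nonlinear, non-gradient fitness when $n\ge3$: for $f=(x_2^2,0,0)$ on $\Delta^2$ one finds at the barycenter $B_1=-2/243$, $B_2=-4/243$, $B_3=0$, and a direct check of the Euler--Lagrange equations in the coordinates $(x_1,x_2)$ shows residuals $2(B_1-B_3)$ and $2(B_2-B_3)\neq 0$. So your instinct was right that a gradient-type restriction enters at exactly this point, and the gap cannot be closed at the stated level of generality. What your decomposition does cleanly prove is the theorem under the explicit additional hypothesis that $\left(\frac{\partial f}{\partial x}-\left(\frac{\partial f}{\partial x}\right)^{T}\right)\hat f$ is proportional to $\mathbf{e}$ on the simplex; state that hypothesis rather than hoping the cross term disappears.
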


For a proof for the setting in which the fitness is constant or linear, or derived from viewing migration or mutation equations appropriately, see \cite{5}. For the general setting of replicator dynamics, see \cite{13}. The proof is summarized in \textcolor{blue}{supplemental calculations section (a)}.

Theorem \ref{varthm} teaches us that the simplex $\Delta^{n-1}$ equipped with the FRS metric is special in that for a general manifold, the solutions to
\begin{align}
\dot{x} &= X(x)
\end{align}
are not always extremals of the Svirezhev family of Lagrangians $L_X$ defined in (\ref{svifamily}). Suppose the manifold is $Q = \mathbb{R}^n$ or an open subset of $\mathbb{R}^n$ and denote $v_x$ as $\dot{x}$. Suppose $K$ is independent of $x$ and is denoted by a constant symmetric matrix $M$ and also denote $F = X$. Then, the Euler-Lagrange equations are:
\begin{align}
M \ddot{x} &= \left(\dfrac{\partial F}{\partial x}\right)^T MF \label{elsys1}
\end{align}
On the other hand, if $\dot{x} = F(x)$, then
\begin{align}
\ddot{x} &= \left(\dfrac{\partial F}{\partial x}\right)\dot{x} = \left(\dfrac{\partial F}{\partial x}\right)F \label{elsys2}
\end{align}
For $\dot{x} = F(x)$ to be an extremal for the Svirezhev Lagrangian, we need from (\ref{elsys1}) and (\ref{elsys2}) that
\begin{align}
M \ddot{x} = \left(\dfrac{\partial F}{\partial x}\right)^T MF &= M\left(\dfrac{\partial F}{\partial x}\right)F  \notag\\
\iff  \left(M\dfrac{\partial F}{\partial x}\right)^T F &= M\left(\dfrac{\partial F}{\partial x}\right)F  \label{svicond}
\end{align}
 Condition (\ref{svicond}) is true for a constant vector field $f$, since its partial derivative is zero. Suppose $F = Ax$, (\ref{svicond}) is 
 \begin{align}
 A^TMA x &= MA^2 x \ \forall x \  \iff  \notag\\
(A^T M - MA)Ax &= 0 \ \forall x \  \iff \notag\\
\left((MA)^T - MA\right)A &= 0 \label{mcond}
 \end{align}
If $M$ is the identity matrix and $A^T = A$, $L_X$ with $X = Ax$ defines a Svirezhev family since (\ref{mcond}) is satisfied. 
\begin{corollary}
In $int(\Delta^{n-1})$, trajectories of every simplex-preserving dynamics $\dot{x}=\Phi(x)$ satisfy the Euler-Lagrange equations associated with minimizing the cost functional $J$
\begin{align}
J = \int\limits_{t_0}^{t_1}\sum\limits_{k=1}^{n} \left(\dfrac{\dot{x}_k^2}{x_k} + x_k\left(f^k - \bar{f}\right)^2\right)dt
\end{align}
where the fitness $f$ is obtained as in theorem (\ref{genrep}). \label{corollary}
\end{corollary}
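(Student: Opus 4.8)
The plan is to obtain this as an immediate consequence of Theorems \ref{genrep} and \ref{varthm}, with the only substantive work being the translation of the Fisher-Rao-Shahshahani norms into explicit coordinates. The corollary is really a repackaging of the two earlier theorems, so I would organize the argument as a short composition followed by a coordinate expansion.

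First I would apply Theorem \ref{genrep}. Since $\dot{x}=\Phi(x)$ is simplex-preserving, in $int(\Delta^{n-1})$ it coincides with the replicator dynamics of the smooth fitness $f=f_\Phi$ whose components are $f_\Phi^k=\Phi^k/x_k$, and for which $\bar{f}_\Phi=0$ and hence $\hat{f}_\Phi=\Phi$. This realizes the given dynamics in precisely the form (\ref{reel}) required to invoke the variational theorem, and it is exactly the fitness referenced in the statement.

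Next I would invoke Theorem \ref{varthm} with this particular fitness. It asserts that the trajectory satisfies the Euler-Lagrange equations associated with the cost functional (\ref{cost}), namely $J=\int_{t_0}^{t_1}[\,||\dot{x}||^2_{FRS}+||\hat{f}_\Phi||^2_{FRS}\,]\,dt$. The remaining step is then purely computational: expanding the two FRS norms using the metric tensor $g_{ij}=\delta_{ij}/x_i$ from (\ref{frsmetric}). The velocity term yields $||\dot{x}||^2_{FRS}=\sum_k \dot{x}_k^2/x_k$, while for the fitness term, substituting $\hat{f}^k=x_k(f^k-\bar{f})$ from (\ref{fhat}) gives $||\hat{f}||^2_{FRS}=\sum_k (\hat{f}^k)^2/x_k = \sum_k x_k(f^k-\bar{f})^2$. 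Summing these reproduces exactly the integrand $\sum_k\bigl(\dot{x}_k^2/x_k + x_k(f^k-\bar{f})^2\bigr)$ claimed in the statement.

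I expect no genuine obstacle here, since the content is entirely supplied by the two cited theorems. The one point requiring care is that Theorem \ref{genrep} produces the fitness $f_\Phi$ only in the interior of the simplex, because the division by $x_i$ is valid only there; this is precisely why the corollary is restricted to $int(\Delta^{n-1})$. I would also note in passing that for this derived fitness the two norm terms in fact coincide along the trajectory, since $\hat{f}_\Phi=\Phi=\dot{x}$, but this observation is incidental and does not affect the Euler-Lagrange computation, which varies the curve freely rather than only along solutions.
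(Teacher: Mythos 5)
Your proposal is correct and is exactly the argument the paper intends: the corollary is the composition of Theorem \ref{genrep} (realizing $\Phi$ as replicator dynamics with $f_\Phi^k = \Phi^k/x_k$ in the interior) with Theorem \ref{varthm}, followed by writing out the FRS norms in coordinates. The paper gives no separate proof for the corollary precisely because this composition is immediate, and your coordinate expansion and the remark that the interior restriction comes from the division by $x_i$ are both accurate.
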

Corollary (\ref{corollary}) shows that all simplex preserving dynamics are associated to variational problems.

\subsection{The Hamiltonian dynamics}
It is natural to view the variational problem stated on the tangent bundle, instead on the cotangent bundle. In terms of local coordinates for the simplex $x_i, i=1,\hdots,n-1$, the co-state or momentum variables can be defined for $i=1,\hdots,n-1$ as follows:
\begin{align}
p_i &= \dfrac{\partial L}{\partial \dot{x}_i} = \dfrac{2\dot{x}_i}{x_i} + \dfrac{2\sum\limits_{k=1}^{n-1}\dot{x}_k}{1-\sum\limits_{k=1}^{n-1}x_k}
\end{align}
or equivalently,
\begin{align}
p = 2\tilde{G}(x_1,\hdots,x_{n-1})\dot{x}
\end{align}
where $\tilde{G}=[\tilde{g}_{ij}], 1\leq i,j \leq n-1$ is the Fisher-Rao-Shahshahani metric expressed in terms of local coordinates with elements
\begin{align}
\tilde{g}_{ij} &=\delta_{ij}\dfrac{1}{x_i} + \dfrac{1}{1-\sum\limits_{k=1}^{n-1}x_k}
\end{align}
The Hamiltonian function can be written down using Legendre transform:
\begin{align}
H(x_1,\hdots,x_{n-1},p_1,\hdots,p_{n-1}) &= \dfrac{1}{2}p^T\tilde{G}^{-1}p - L(x_1,\hdots,x_{n-1},p_1,\hdots,p_{n-1}) \label{hameqn}
\end{align}
Denoting $y=[x_1 \ \hdots \ x_{n-1}]^T$ and $p=[p_1 \ \hdots \ p_{n-1}]^T$, and observing that the Lagrangian is given by
\begin{align}
L(y,p) &= \dfrac{1}{4}p^T\tilde{G}^{-1}p - V(y) \ \ \text{with} \notag\\
V(y) &= -\sum\limits_{k=1} ^{n-1}y_k (f^k(y) - \bar{f}(y))^2 - (1-\sum\limits_{k=1}^{n-1}y_k)(f^n(y)-\bar{f}(y))^2
\end{align} 
Defining $T(y,p) = \dfrac{1}{4}p^T\tilde{G}^{-1}(y)p$, (\ref{hameqn}) is equivalently,
\begin{align}
H(y,p) &= T(y,p) + V(y)
\end{align}
Therefore, the Hamiltonian function is the sum of terms $T(y,p)$ and $V(y)$ respectively resembling kinetic and potential energy terms in mechanics. From this, the Hamiltonian dynamics can be derived as:
\begin{align}
\dot{y} &= \dfrac{\partial H}{\partial p} = 	 \dfrac{1}{2}\tilde{G}^{-1}p \notag\\
\dot{p} &= -\dfrac{\partial H}{\partial y} = g(y,p) -\dfrac{\partial V}{\partial y} \label{ham}
\end{align} 
where $g(y,p)$ is the $n-1 \times 1$ vector with components $g_i(y,p) = -\dfrac{1}{4}\dfrac{\partial p^T \tilde{G}^{-1}p}{\partial y_i}$. Supplemental section \ref{supp_periodic} discusses the existence of periodic orbits for this dynamics by using Birkhoff's theorem (as in \cite{35,36}), summarizing calculations from \cite{13}.
\begin{figure}[t]
\centering
\subfigure[]{\includegraphics[clip, trim=1.5cm 6cm 1.5cm 6cm, scale=0.35]{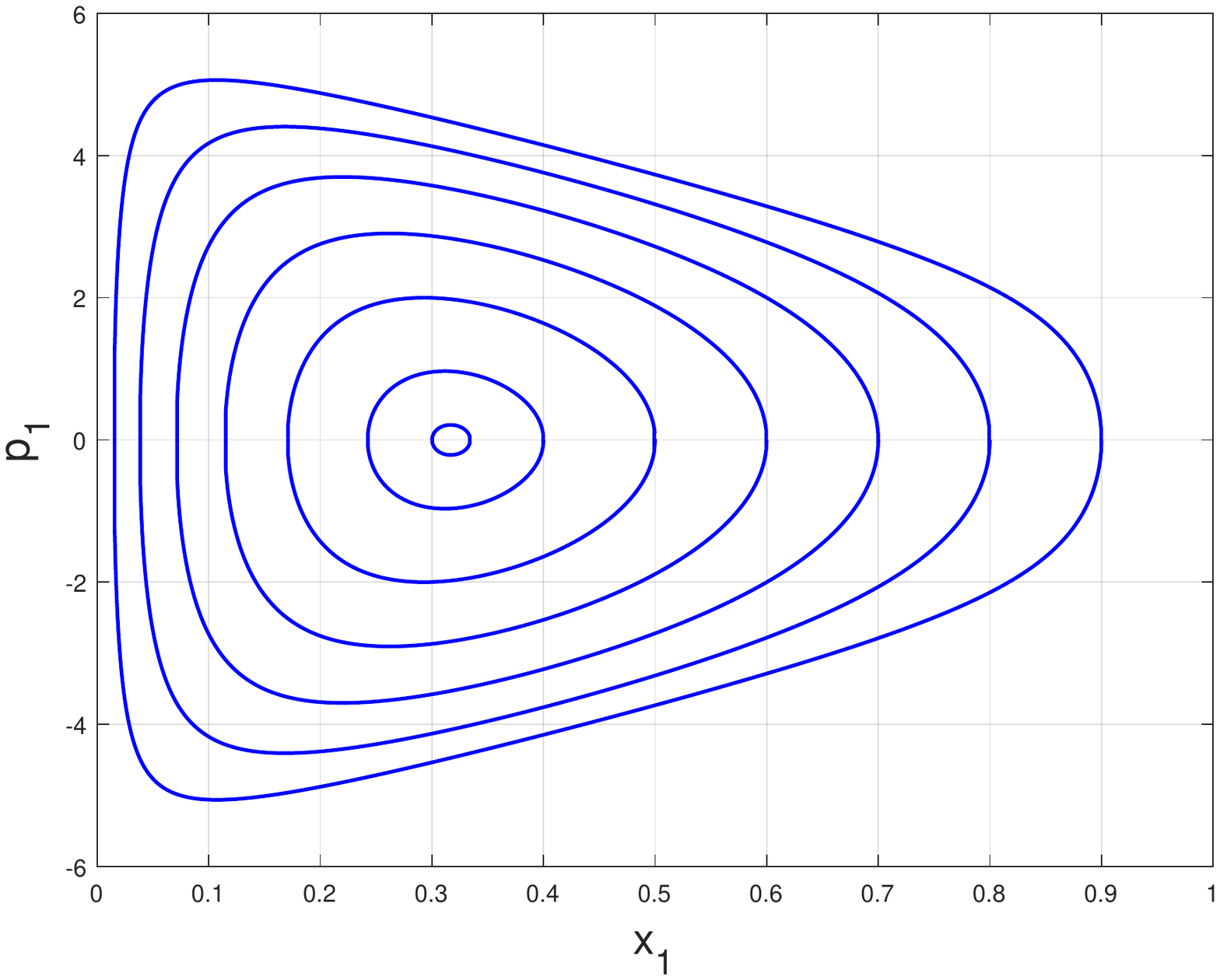}}
\subfigure[]{\includegraphics[clip, trim=1.5cm 6.5cm 1.5cm 7.5cm, width=0.45\linewidth]{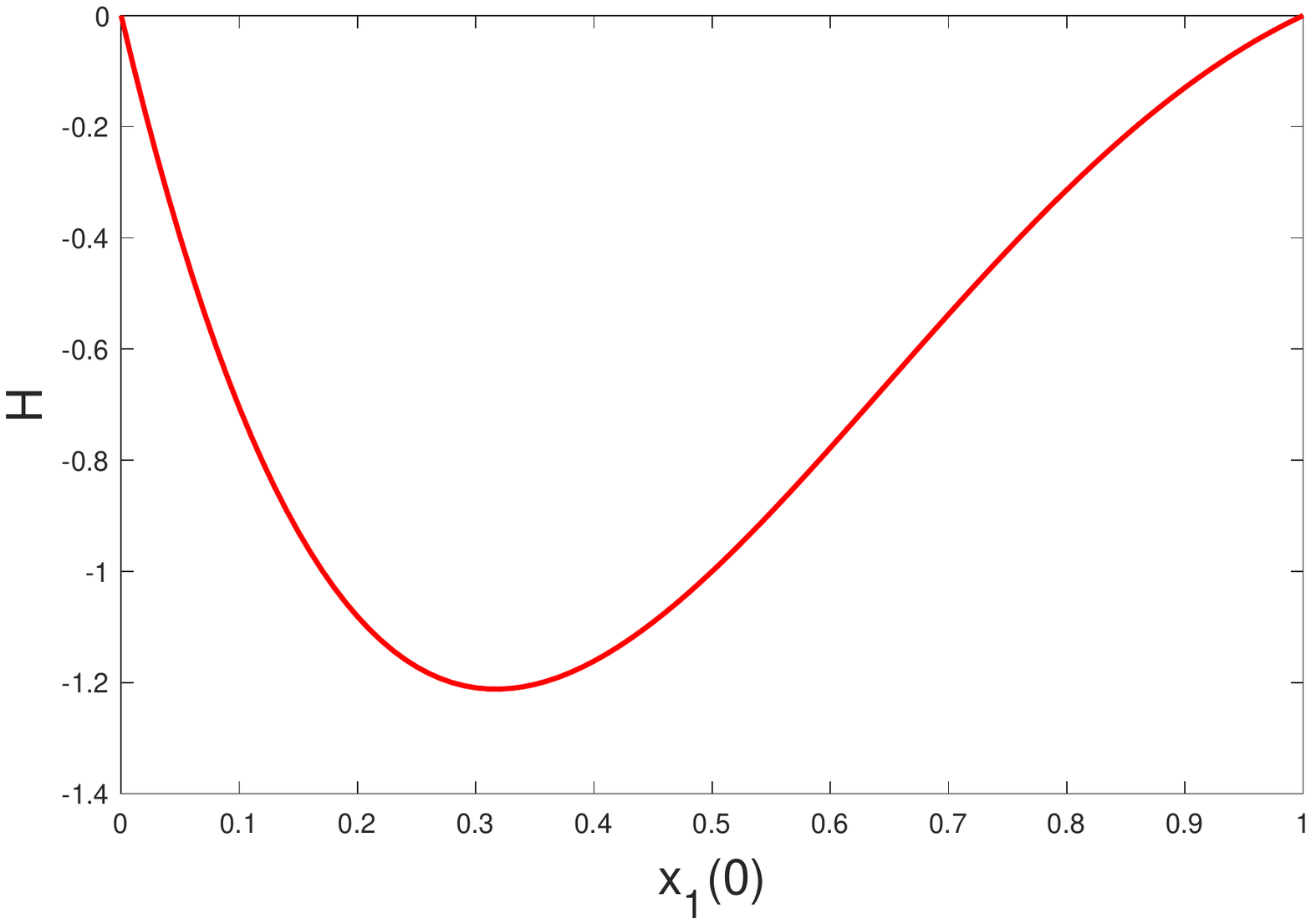}} \\
\vspace{-0.5cm}
\subfigure[]{\includegraphics[clip, trim=4cm 8cm 4cm 8cm, scale=0.45]{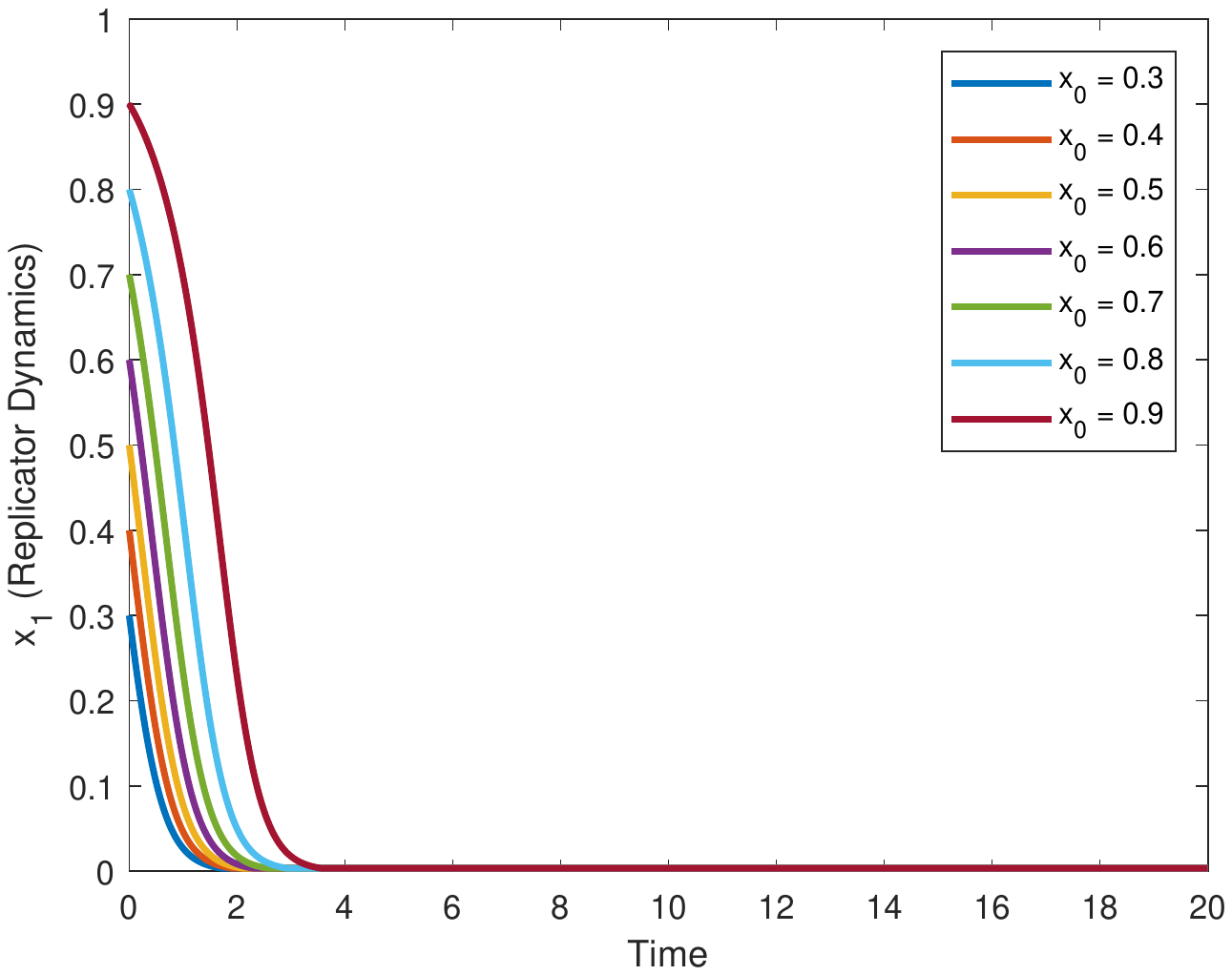}}\hspace{0.5cm}
\subfigure[]{\includegraphics[clip, trim=4cm 8cm 4cm 8cm, scale=0.45]{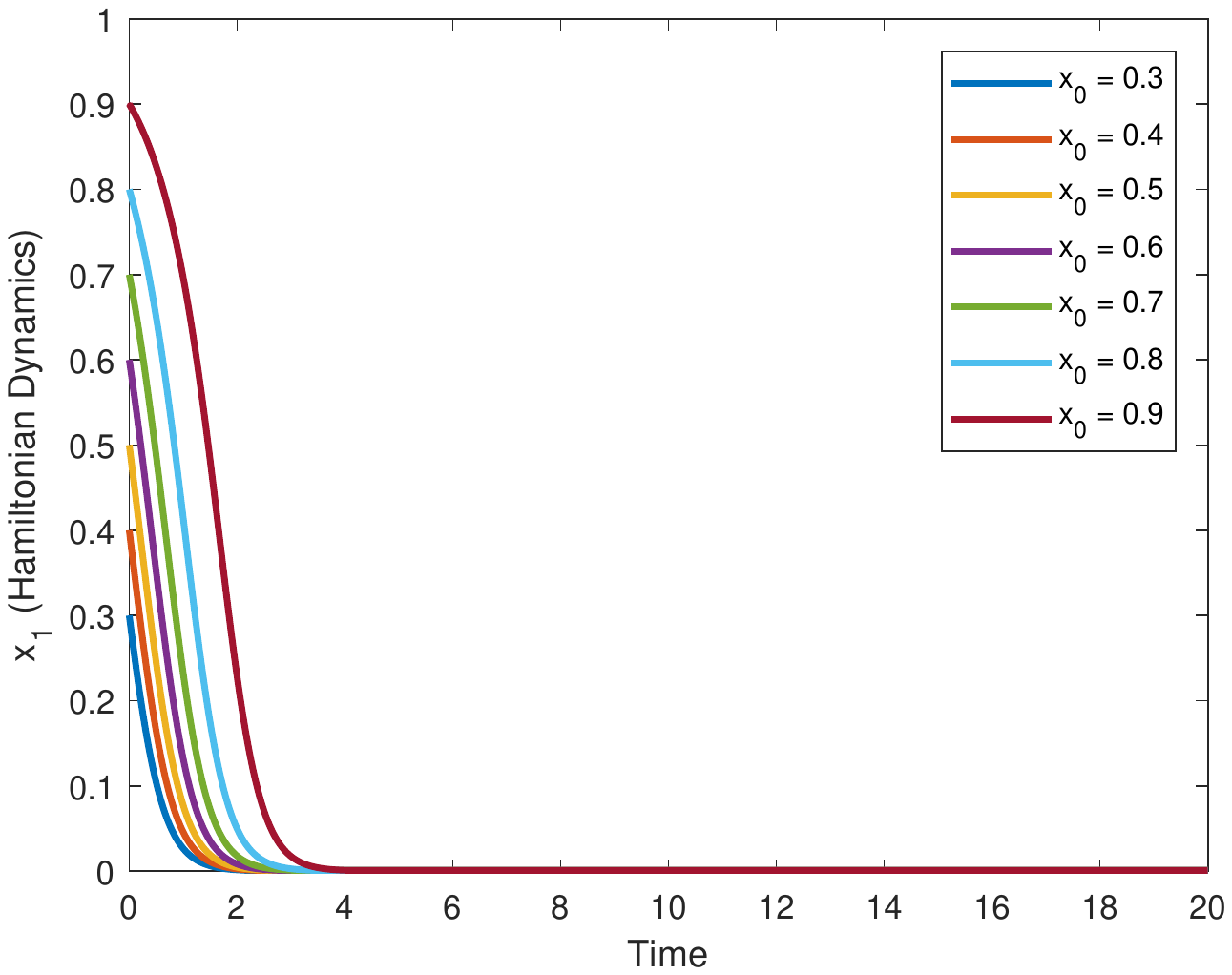}}
%\subfigure[]{\includegraphics[clip, trim=2cm 7cm 2cm 7cm, scale=0.3]{H}}
\caption{\textbf{Prisoner's dilemma. }(a). Simulation of the Hamiltonian dynamics for initial conditions $(x_1(0),0)$ on the $x_1 - $axis. Step size, $\Delta t = 10^{-4}$. The Hamiltonian is conserved upto an error of the order of $10^{-5}$. (b). Values of the Hamiltonian function for initial conditions on the $x_1 - $axis. (c). Simulation of replicator dynamics and (d). Hamiltonian dynamics with $x_1(0) = x_0, p_1 = 2((Ax)^1 - (Ax)^2)$ such that $H=0$.}\label{pdfig}
\end{figure}
\subsection{Hamiltonian dynamics for $n=2$}
We specialize the Hamiltonian description for dynamics on the one dimensional simplex for $n=2$, yielding a two dimensional phase space for the Hamiltonian dynamics for $f(x) = Ax$ with $A = [a_{ij}], 1\leq i,j \leq 2$. The Lagrangian, co-state and Hamiltonian are given in terms of coordinates $(x_1,p_1)$ as
\begin{align}
L(x_1,\dot{x}_1) &= \dfrac{\dot{x}_1^2}{x_1(1-x_1)} + x_1(1-x_1)\left(\left(Ax\right)^1 - \left(Ax\right)^2\right)^2, \notag\\
p_1 &= \dfrac{2\dot{x}_1}{x_1(1-x_1)}, \notag\\
H(x_1,p_1) &=  \dfrac{p_1^2 x_1(1-x_1)}{4} - x_1(1-x_1)\left(\left(Ax\right)^1 - \left(Ax\right)^2\right)^2 \label{hamfunc}
\end{align}
Suppose $a,b$ are constants such that $a = a_{11}-a_{12}-a_{21}+a_{22}$, $b = a_{12}-a_{22}$, we get:
\begin{align}
\dot{x}_1 &= \dfrac{p_1 x_1 (1-x_1)}{2}\notag\\
\dot{p}_1 &= -\dfrac{p_1^2 (1-2x_1)}{4} + (1-2x_1)(ax_1 + b)^2 + 2ax_1(1-x_1)(ax_1+b) \label{pd}
\end{align}
%\begin{figure}[t]
%\centering
%\subfigure[]{\includegraphics[clip, trim=1cm 6cm 1cm 6cm, width=0.49\linewidth]{pdr3_1}}
%\subfigure[]{\includegraphics[clip, trim=1cm 6cm 1cm 6cm, width=0.49\linewidth]{pdr3_2}}
%\caption{\textbf{Prisoner's dilemma. }(a). Simulation of the replicator and Hamiltonian dynamics for the initialization of $x_1(0)$ as in Fig. 1 and $p_1(0)$ uniformly chosen in $(0,1)$. $x_1$ is in blue, $x_2$ in red. $x_1$ and hence $x_2 = 1-x_1$ are periodic. (b). Evolution of the state, co-state and the Hamiltonian. $x_1, p_1$ are periodic solutions. The Hamiltonian is conserved upto an error of the order of $10^{-5}$. Step size, $\Delta t = 10^{-4}$.}
%\end{figure}

\subsubsection{Prisoner's dilemma}
We illustrate the solutions to the Hamiltonian dynamics through numerical simulations using the mid point rule \cite{37} for fitness given by the payoff matrix of the Prisoner's dilemma game. Here, $f = Ax$ where
\begin{align}
A = \left[\begin{array}{cc}
4 & 0 \\
5 & 3
\end{array}\right]
\end{align}
In this 2-player game, a player (the prisoner) has to choose between cooperate (strategy 1) and defect (strategy 2).  Strategy 2 dominates strategy 1 since the elements of row 2 which are the payoffs when the prisoner chooses to defect are higher, no matter the opponent's choice. However, both players receive a higher payoff if they both choose to cooperate, which is the dilemma. Since the fitness satisfies $f^2 (x)- f^1(x) = (Ax)^2-(Ax)^1 = 3-2x_1 > 0 \ \forall \ x_1 \in \Delta^{n-1}$, there are no interior equilibria for the replicator dynamics, and  the trajectories initialized with $x_1(0)\in (0,1)$ converge to $x_1 = 0$. The Hamiltonian dynamics (\ref{pd}) has an interior equilibrium point $P$ at $((3-\sqrt{3})/4,0)$. The vertices of the simplex $x_1 = 0$ and $x_1 =1$ are both invariant under the dynamics and correspond to $H=0$, as do the trajectories with initial conditions for $p_1$ corresponding to replicator dynamics. Simulation results are shown in figure \ref{pdfig}. Trajectories with initial conditions on the $x_1-$ axis correspond to periodic orbits in the phase space. Appealing to Birkhoff's theorem \cite{14}, the periodic orbits of the Hamiltonian system associated to Prisoner's Dilemma can be understood. Further, it can be seen that the value of the Hamiltonian for trajectories initialized on the $x_1-$ axis has a minimum at $P$. Figure \ref{pdfig} also depicts the identical trajectories of $x_1$ for replicator and Hamiltonian dynamics when the co-state is initialized such that $p_1 = 2( (Ax)^1 - (Ax)^2) = 2(2 x_1 - 3)$ and $H = 0$. 

It is worth noting that while the Hamiltonian system exhibits periodic behaviour, replicator dynamics on the one dimensional simplex do not. In the latter case, convergence to the vertices of the simplex or internal equilibria is typically observed. In higher dimensional simplices, periodic solutions may arise, such as in rock-paper-scissors game \cite{6} and replicator dynamics describing reaction kinetics \cite{9}.

\section{Replicator control systems\label{cs}}

We extend replicator dynamics to define a replicator control system as a driven dynamical system on $\Delta^{n-1}$:
\begin{align}
\dot{x} &= \sum\limits_{k=1}^{m}u_k \hat{f}_k \label{repcs}
\end{align}
where $\forall \ k, \ \hat{f}_k$ is the replicator vector field associated with the fitness $f_k$ and $u_k, k=1,\hdots,m$ are control inputs. The control variable has the effect of modulating the fitness. This is different from payoff modifications meant to reflect a delay in the perception of the fitness as in \cite{38}, or an addition to the fitness to attempt asymptotic stabilization about a desired equilibrium \cite{39}. Using a theorem due to Chow and Rashevskii stated below, we investigate sufficient conditions to be satisfied by the fitness maps for this system to be controllable. We first define controllability for a system defined on a manifold $M$:
\begin{align}
\dot{x} = \sum\limits_{i=1}^{m}u_iF_i(x) \label{gensys}
\end{align}
\begin{definition}
We say that the system (\ref{gensys}) is controllable if given any two points $a$ and $b$ on the smooth connected manifold $M$ wherein the system is defined, there exists a control $u(t) = (u_1(t),..,u_m(t))$, and associated solution to (\ref{gensys}) such that $x(0) = a$ and $x(T) = b$ for some $T>0$.
\end{definition}
\begin{theorem}(Chow-Rashevskii theorem, \cite{40,41,42})
The system (\ref{gensys}) defined on a connected, smooth manifold $M$ of dimension $p$, is controllable if $\forall x \in M$, there exist $p$ linearly independent vector fields in the Lie algebra generated by $\left\{F_i\right\}_{i=1}^{m}$ that span the tangent space $T_xM$ at $x$. \label{crtheorem}
\end{theorem}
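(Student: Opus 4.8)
The plan is to prove controllability by identifying the set of points reachable from a given initial state $a$ with the \emph{orbit} of $a$ under the (pseudo)group of diffeomorphisms generated by the flows of $F_1,\dots,F_m$, and then showing that every orbit is all of $M$. The crucial structural input is that the admissible controls $u_k(t)$ are unconstrained real-valued signals, so in particular piecewise-constant controls of either sign are permitted.

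First I would reduce the control problem to a purely geometric statement about orbits. Holding one control at a constant value $c$ and the rest at zero realizes, over a time interval of length $\tau$, the flow $\exp(c\tau F_i)$; taking $c<0$ realizes a backward flow. Concatenating such pieces shows that the reachable set from $a$ contains every point of the form
\begin{align}
\exp(t_N F_{i_N})\circ\cdots\circ\exp(t_1 F_{i_1})(a), \qquad t_1,\dots,t_N\in\mathbb{R}, \notag
\end{align}
that is, the entire orbit $O_a$. Because backward flows are available the reachability relation is symmetric, so it suffices to prove $O_a = M$ for each $a$: then $M = O_a \subseteq \{\text{reachable from } a\} \subseteq M$, and controllability follows.

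Next I would invoke the Orbit Theorem (Nagano--Sussmann), which guarantees that each orbit $O_a$ is a connected immersed submanifold of $M$ whose tangent space at every point contains the evaluation of the Lie algebra generated by the $F_i$:
\begin{align}
\mathrm{Lie}\{F_1,\dots,F_m\}(x)\subseteq T_x O_a \qquad \forall\, x\in O_a. \notag
\end{align}
By hypothesis the left-hand side already has dimension $p=\dim M$, so $T_x O_a = T_x M$ and $O_a$ is an \emph{open} subset of $M$. Distinct orbits are disjoint, hence the complement of $O_a$ is a union of open orbits and is itself open; thus each orbit is both open and closed. Since $M$ is connected there is a single orbit, namely $O_a = M$, which combined with the reduction above yields controllability.

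The main obstacle is the Orbit Theorem itself, specifically the inclusion $\mathrm{Lie}\{F_i\}(x)\subseteq T_x O_a$: one must show that the bracket directions $[F_i,F_j]$ and higher iterated brackets are genuinely tangent to the orbit, not merely approximated infinitesimally. The mechanism is the asymptotic expansion of the flow commutator,
\begin{align}
\exp(-sF_j)\circ\exp(-sF_i)\circ\exp(sF_j)\circ\exp(sF_i)(x) = x + s^2\,[F_i,F_j](x) + o(s^2), \notag
\end{align}
together with invariance of the distribution $x\mapsto T_x O_a$ under pushforward by the generating flows, which upgrades these second-order increments to actual tangent vectors of $O_a$. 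Since the theorem is classical and cited here (\cite{40,41,42}), I would state the Orbit Theorem and defer its proof to those references; for a self-contained account one would instead characterize $x\mapsto T_x O_a$ as the smallest flow-invariant distribution containing $F_1,\dots,F_m$ and verify directly, via the commutator expansion above, that it is bracket-closed and therefore contains $\mathrm{Lie}\{F_i\}$.
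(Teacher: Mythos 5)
The paper does not prove this statement at all: it is quoted as a classical result and dispatched to the cited references \cite{40,41,42}, so there is no in-paper argument to compare against. Your proposal is a correct rendition of the standard modern proof, and the route you choose --- reduce reachability to the orbit $O_a$ of the family $\{F_i\}$ (legitimate here because the system is driftless and the controls are unconstrained in sign and magnitude, so every composition $\exp(t_N F_{i_N})\circ\cdots\circ\exp(t_1 F_{i_1})$ with $t_j\in\mathbb{R}$ is realizable in positive physical time), then invoke the Nagano--Sussmann Orbit Theorem to get $\mathrm{Lie}\{F_i\}(x)\subseteq T_xO_a$, conclude each orbit is open, and use connectedness of $M$ plus disjointness of orbits to force $O_a=M$ --- is sound. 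Two remarks. First, as you acknowledge, the entire weight of the argument sits in the Orbit Theorem, which you also defer to references; a self-contained proof closer in spirit to Chow's original (and to what \cite{41,42} actually do) avoids the Orbit Theorem by directly constructing, from compositions of the flows $\exp(tF_i)$, a map from a neighborhood of $0$ in $\mathbb{R}^p$ into the reachable set whose differential has rank $p$ (built inductively one independent direction at a time, using the commutator expansion to manufacture bracket directions), whence the reachable set has nonempty interior and an open-closed argument finishes; that version buys elementarity at the cost of a more delicate rank computation. Second, the sign/order convention in your commutator expansion $\exp(-sF_j)\circ\exp(-sF_i)\circ\exp(sF_j)\circ\exp(sF_i)(x)=x+s^2[F_i,F_j](x)+o(s^2)$ depends on whether composition is read right-to-left, but since both $[F_i,F_j]$ and $[F_j,F_i]$ lie in the generated Lie algebra this is immaterial to the argument. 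No genuine gap beyond the deliberately outsourced Orbit Theorem.
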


\begin{remark}\label{sc}
In the following discussion, we specialize (\ref{repcs}) to replicator control systems with two constituent replicator vector fields defined by linear and frequency independent fitness respectively. 
\end{remark}

Replicator dynamics with frequency independent fitness are observed when members of a population of $n$ types are engaged in games against nature \cite{3}. In this case, trajectories initialized in the interior asymptotically approach that vertex corresponding to the type with highest fitness. When the individuals are engaged in a symmetric game against others in the population, the payoff matrix defines a linear fitness. Replicator dynamics with linear fitness exhibit rich behaviour discussed in detail in \cite{43,6}. Here, we use the homomorphism property of section \ref{lastruct1}, relating replicator bracket of fitness maps to Jacobi Lie bracket of replicator vector fields, to investigate conditions satisfied by the constituent fitness maps for controllability of this special case (\ref{repcs}). We need the following preposition as a first step in this direction.
\begin{proposition} \label{propind}
Suppose the set of fitness maps $\left\{h_i\right\}_{i=1}^{n-1}$ satisfies the point-wise property at $x\in S \subset int(\Delta^{n-1})$ for some $c_i$, $\mu$ that may depend on $x$:
\begin{align}
\sum\limits_{i=1}^{n-1}c_i h_i = \mu \mathbf{e} \implies c_i = 0 \ \forall i, x \in S.
\end{align}
Then, the associated replicator vector fields $\left\{\hat{h}_1,\hdots,\hat{h}_{n-1} \right\}$ span the tangent space at each $x \in S$. 
\end{proposition}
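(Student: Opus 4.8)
The plan is to read the conclusion as a statement of linear independence. Since $\dim T_x\Delta^{n-1} = n-1$ and the list $\{\hat h_1,\dots,\hat h_{n-1}\}$ contains exactly $n-1$ vectors, spanning the tangent space at $x$ is equivalent to these vectors being linearly independent at $x$. So it suffices to show that for each fixed $x\in S$ the only scalars $c_1,\dots,c_{n-1}$ (which I allow to depend on $x$) with $\sum_{i=1}^{n-1}c_i\hat h_i(x)=0$ are $c_1=\dots=c_{n-1}=0$.

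First I would fix $x\in S$ and a dependence relation $\sum_{i=1}^{n-1}c_i\hat h_i(x)=0$, and rewrite each $\hat h_i$ in the compact form of (\ref{fhat}), namely $\hat h_i=\Lambda(x)\bigl(h_i-\bar h_i\mathbf{e}\bigr)$ with $\bar h_i=\sum_j x_j h_i^j$. Factoring the common diagonal matrix out of the sum gives $\Lambda(x)\bigl(\sum_i c_i h_i-(\sum_i c_i\bar h_i)\mathbf{e}\bigr)=0$.

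The crucial step is to invert $\Lambda(x)$: because $x$ lies in $int(\Delta^{n-1})$, every coordinate $x_j$ is strictly positive, so $\Lambda(x)=\mathrm{diag}(x_1,\dots,x_n)$ is invertible. Applying $\Lambda(x)^{-1}$ therefore yields $\sum_{i=1}^{n-1}c_i h_i=\mu\,\mathbf{e}$ where $\mu:=\sum_{i=1}^{n-1}c_i\bar h_i$ is a scalar (depending on $x$). This is precisely the antecedent of the hypothesized implication, so the point-wise property forces $c_i=0$ for every $i$. Hence the $\hat h_i(x)$ are independent, and since there are $n-1$ of them they span $T_x\Delta^{n-1}$; as $x\in S$ was arbitrary, this holds at every point of $S$.

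There is no genuinely hard step here: the argument is essentially a change of variables by the invertible map $\Lambda(x)$ that converts linear independence of the replicator vector fields into the stated linear independence of the fitness maps modulo the $\mathbf{e}$ direction. The only points demanding care are (i) that the multiplier $\mu$ is not arbitrary but is pinned down as $\sum_i c_i\bar h_i$, so the hypothesis must be read as holding for this induced $\mu$, and (ii) that the invertibility of $\Lambda(x)$, and hence the whole reduction, relies essentially on working in the interior $int(\Delta^{n-1})$ — on the boundary the diagonal would be singular and the equivalence would break down.
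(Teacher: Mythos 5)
Your proof is correct and follows essentially the same route as the paper: both recognize that $\sum_i c_i\hat h_i(x)=\Lambda(x)\bigl(\sum_i c_i h_i-(x^T\sum_i c_i h_i)\mathbf{e}\bigr)$ is itself the replicator vector field of the combined fitness $\sum_i c_i h_i$, so that vanishing of the combination in the interior forces $\sum_i c_i h_i$ to be component-wise uniform, whereupon the hypothesis kills the $c_i$. Your version is slightly more explicit about inverting $\Lambda(x)$ and about the count $n-1=\dim T_x\Delta^{n-1}$ turning independence into spanning, but these are details the paper leaves implicit rather than a different argument.
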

\begin{proof} Note that
\begin{align} 
\sum\limits_{i=1}^{n-1}c_i(x)\hat{h}_i (x) &=  \sum\limits_{i=1}^{n-1}c_i(x)\Lambda(x)\left(h_i(x) - x^T h_i(x) \textbf{e}\right) \notag\\
&= \Lambda(x) \sum\limits_{i=1}^{n-1} \left(c_i(x)h_i(x) - x^T c_i(x)h_i(x) \textbf{e}\right) \notag\\
%&= \Lambda(x) \left[\left(\sum\limits_{i=1}^{n-1} \left(c_i(x)h_i(x)\right)  - \left(x^T \sum\limits_{i=1}^{n-1} \left(c_i(x)h_i(x)\right)\textbf{e}\right] \notag\\
&= \rwh{\sum\limits_{i=1}^{n-1}c_i h_i} \label{hatmap}
\end{align}
If (\ref{hatmap}) is zero, in the simplex interior, this implies that $\sum\limits_{i=1}^{n-1}c_i h_i = \mu(x)\textbf{e}$ by remark \ref{compunif}.                  
\end{proof}
We have shown in Proposition \ref{propind} that the condition for linear independence of the replicator vector fields is that no non-trivial linear combination of the constituent fitness maps make a component-wise uniform fitness map for each point in the interior of the simplex. This is a stronger condition than linear independence of the fitness maps. We use this result and the homomorphism property to specify conditions for the special class of replicator control systems noted in remark (\ref{sc}) to be controllable.

\begin{theorem}
Consider the replicator control system:
\begin{align}
\dot{x} &= \sum\limits_{k=1}^{2}u_k \hat{f}_k \label{sys2}
\end{align}
with the fitness maps given by $f_1 = [a_1 \ a_2 \ \hdots \ a_n], \  f_2 = Bx$ where $B$ is a non-singular $n \times n$ matrix. Without loss of generality, one can assume that all $a_i > 0$. Suppose further that the $a_i$ are all distinct and $B$ is non-singular. Then the system (\ref{sys2}) is controllable in the interior of $\Delta^{n-1}$.   \label{conthe}
\end{theorem}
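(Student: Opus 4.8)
The plan is to verify the hypothesis of the Chow--Rashevskii theorem (Theorem \ref{crtheorem}): at every $x\in int(\Delta^{n-1})$ I must produce $n-1$ linearly independent vector fields lying in the Lie algebra generated by $\hat f_1,\hat f_2$ and spanning $T_x\Delta^{n-1}$. By the homomorphism \emph{Rep} of Theorem \ref{homtheorem} this Lie algebra is the image of the Lie algebra generated by $f_1,f_2$ under the $R$-bracket, so I may compute entirely with fitness maps. Since $f_1=a$ is constant, the $R$-bracket against $f_1$ collapses to a single componentwise directional derivative: $\{f_1,g\}_R=X_{f_1}g=:Lg$, the term $X_g f_1$ vanishing because $f_1$ is constant. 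Consequently the $k$-fold iterated bracket of $f_1$ against $f_2$ equals $L^k f_2$ and lies in the generated Lie algebra for every $k$; these supply my candidate maps $h_k:=L^k f_2$, $k=0,1,\dots,n-1$.

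The heart of the argument is a structural computation. With $w=\hat f_1=\Lambda(x)(a-\bar a\mathbf e)$ one has $Lx=w$ and $L(Bx)=B(Lx)$, since $L$ commutes with multiplication by the constant matrix $B$; hence $h_k=B\,v^{(k)}$ where $v^{(k)}:=L^k x$. I will then show by induction on $k$ that $v^{(k)}_j=x_j\,r_k(a_j)$, with $r_k$ a polynomial of degree exactly $k$ in $a_j$ whose leading coefficient is $1$ (its lower coefficients may depend on $x$ through the moments of $a$, which is harmless at a fixed $x$). The inductive step rests on $v^{(k+1)}_j=\sum_r x_r(a_r-\bar a)\,\partial_r\!\big(x_j r_k(a_j)\big)$: the diagonal ($r=j$) contribution is $x_j(a_j-\bar a)r_k(a_j)$, which raises the degree by one while preserving the leading coefficient, whereas the remaining terms merely differentiate the $x$-dependent coefficients of $r_k$ and so stay of degree $\le k$ in $a_j$.

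The rest is linear algebra at the fixed point $x$. Being monic of degrees $0,1,\dots,n-1$, the polynomials $r_0,\dots,r_{n-1}$ relate to the monomials $1,a_j,\dots,a_j^{n-1}$ by a triangular, hence invertible, change of basis; as the $a_j$ are distinct the corresponding Vandermonde matrix is invertible, so the vectors $\{[r_k(a_j)]_j\}_{k=0}^{n-1}$ span $\mathbb R^n$. Applying the invertible $\Lambda(x)$ (interior point) and the nonsingular $B$ shows that $\{h_0,\dots,h_{n-1}\}$ spans $\mathbb R^n$. Finally the assignment $g\mapsto\hat g$ is linear from $\mathbb R^n$ onto $T_x\Delta^{n-1}$ with kernel $\mathrm{span}\{\mathbf e\}$ (Remarks \ref{compunif}--\ref{compunifrev}), so a spanning set of $\mathbb R^n$ is carried to a spanning set of the $(n-1)$-dimensional tangent space; choosing $n-1$ independent vectors among $\hat h_0,\dots,\hat h_{n-1}$ and invoking Theorem \ref{crtheorem} yields controllability. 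The normalization $a_i>0$ costs nothing: adding $c\mathbf e$ to $f_1$ moves it within the ideal $\mathcal I$, leaving $\hat f_1$ unchanged (Remark \ref{compunif}) and preserving distinctness.

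I expect the induction establishing the polynomial form to be the main obstacle, since everything hinges on the leading coefficient never cancelling, so that each successive bracket genuinely increases the degree; the distinctness of the $a_i$ enters precisely here through the Vandermonde determinant. A subtlety worth flagging is that the shorter list $h_0,\dots,h_{n-2}$ (only $n-1$ brackets) does \emph{not} suffice in general: its hat-images span $T_x\Delta^{n-1}$ only when $\mathbf e$ is transverse to the span of the underlying maps, which can fail at isolated $x$; carrying the recursion one step further to $L^{n-1}f_2$, so that the maps already span all of $\mathbb R^n$, removes any dependence on the position of $\mathbf e$ and makes the spanning uniform over the interior.
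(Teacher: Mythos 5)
Your proof is correct, but it takes a genuinely different route from the paper's. Both arguments reduce to Chow--Rashevskii via the homomorphism \emph{Rep}, and both end up producing $n$ candidate fitness maps whose iterated brackets with $f_1$ are governed by powers of $a$. The paper works with the explicit family $g_k = BA^{k-1}x$ (capped at $n$ by Cayley--Hamilton), writes $[Bx \ BAx \ \cdots \ BA^{n-1}x] = B\Lambda(x)V$ with $V$ the Vandermonde matrix, and then, to extract $n-1$ tangent directions via its Proposition \ref{propind}, must show that for every interior $x$ at least one of the $n$ column-deleted submatrices $G_k$ satisfies the non-degeneracy condition; this is the heart of its proof (the row vectors $r_k$, the matrix $R$ with trivial null space, and the supplemental Schur-polynomial argument that the generalized Vandermonde minors $\tilde V_k$ are nonsingular --- which is where the positivity of the $a_i$ is actually used). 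You instead observe that bracketing with the constant $f_1$ is exactly the first-order operator $L = X_{f_1}$, prove by induction that $L^k(Bx) = B\Lambda(x)\,[r_k(a_j)]_j$ with $r_k$ monic of degree $k$ (with $x$-dependent lower coefficients), deduce spanning of all of $\mathbb{R}^n$ from the unitriangular reduction to $V$, and then finish with the rank--nullity observation that the pointwise hat map $g \mapsto \Lambda(x)(g - (x^Tg)\mathbf{e})$ is a surjection onto $T_x\Delta^{n-1}$ with kernel $\mathrm{span}\{\mathbf{e}\}$. This last step is the real divergence: it replaces the paper's pointwise case analysis over $k$ entirely, and as a byproduct your argument needs only distinctness of the $a_i$, not positivity. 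Your closing remark about why $n$ maps rather than $n-1$ are needed is precisely the difficulty the paper's column-selection argument is built to handle; going one bracket deeper is the cleaner resolution.
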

\begin{proof}
To apply theorem \ref{crtheorem}, we consider the Lie algebra of vector fields generated by $\hat{f}_1$ and $\hat{f}_2$. However, due to the homomorphism property of section \ref{lastruct1}, we know that the Jacobi-Lie bracket of replicator vector fields can be written in terms of the replicator vector field associated to bracket of the fitness maps. That is,
\begin{align}
\left[\hat{f}_1,\hat{f}_2\right] = \rwh{\left\{f_1,f_2\right\}_R}
\end{align}
We exploit this property and investigate conditions under which (\ref{sys2}) is controllable in terms of specifications on the Lie algebra generated by the fitness maps. \\
Let $\Lambda(x) = diag(x_1,x_2,\hdots,x_n)$ and $A = diag(a_1,a_2,\hdots,a_n)$. Observing that $\dfrac{\partial f_1}{\partial x} =0$, the replicator bracket of the fitness maps is given by:
\begin{align}
\left\{f_1,f_2\right\}_R &= \dfrac{\partial f_2}{\partial x}\hat{f}_1
%\left[\begin{array}{c}
%x_1(a_1 - x^Ta) \\
%x_2(a_2 - x^Ta) \\
%\vdots \\
%x_n(a_n - x^Ta)
%\end{array}\right] = \dfrac{\partial f_2}{\partial x} \Lambda(x)\left[\begin{array}{c}
%a_1 - x^Ta \\
%a_2 - x^Ta \\
%\vdots \\
%a_n - x^Ta
%\end{array}\right] 
= BA x - (x^Ta)Bx
\end{align}
Letting  $\tilde{G} = \left\{g_k(x) = BA^{k-1}x, k=1,\hdots,n\right\}$, it can be seen that $\forall \ g_k \in \tilde{G},\  k = 1,\hdots,n$:
\begin{align}
\left\{f_1,g_k\right\}_R &= \dfrac{\partial BA^{k-1}x}{\partial x} \Lambda(x)(a - (x^Ta)e) \notag\\
&= BA^k x - (x^Ta)BA^{k-1}x = BA^k x - (x^Ta)g_k \implies \notag\\
\rwh{\left\{f_1,g_k\right\}_R} &= \rwh{BA^k x} - (x^T a)\hat{g}_k  \label{recrel}
\end{align}
Therefore, by the homomorphism property (theorem \ref{homtheorem}), by successively bracketing the vector field $\hat{f}_1$ with $\hat{g}_k$, for different values of $k$, we produce linear combinations of pairs of vector fields: one vector field that is linearly dependent on $\hat{g}_k$, and another vector field associated with the fitness $BA^k x$, which gives a potentially new tangent direction. Note that $\tilde{G}$ is not closed under the replicator bracket (this can be seen by considering brackets of the form $\left\{g_k,g_l\right\}_R$). Thus, in general, the set of fitness maps $f_1 \cup \tilde{G}$ generates an infinite dimensional Lie algebra under the replicator bracket. However, as we will see below, the subset of fitness maps $\left\{f_1, g_1, \hdots, g_n \right\}$, a finite dimensional vector space satisfies (\ref{recrel}). Due to Cayley-Hamilton theorem, it is sufficient to consider $g_k$, $k=1,\hdots,n$. Therefore, in what follows, we investigate conditions for fitness maps in the set $\tilde{G}$, under repeated $R-$ bracketing, to produce replicator vector fields that span the tangent space at each $x$ in the interior of the simplex. 

Let $G \in \mathbb{R}^{n \times n}$ denote the matrix 
\begin{align}
G = [Bx \ BAx \ BA^2x \ \hdots  \ BA^{n-1}x]
\end{align}
and $G_{k} \in \mathbb{R}^{n \times (n-1)}$ denote the matrix with columns comprising all but the $k^{th}$ column of $G$, for $k=1,\hdots,n$. That is,
\begin{align}
G_1 &= [BAx \ BA^2x \ \hdots \ BA^{n-1}x] \notag\\
G_k &= [Bx  \ \hdots BA^{k-2}x \ BA^{k}x \ \hdots \ BA^{n-1}x] \ , \ 1<k<n, \notag\\
G_n &= [Bx \ BAx \ \hdots \ BA^{n-2}x] 
\end{align}
Let $a = [a_1 \ a_2 \ \hdots \ a_n]^T$, $a^k = [a_1^k \ a_2^k \ \hdots \ a_n^k]^T$ for all $k=2,\hdots,n$ with $a^0 \triangleq \textbf{e}$, $V \in \mathbb{R}^{n \times n}$ denote the Vandermonde matrix 
\begin{align}
V = [\textbf{e}  \ a \ a^2 \ \hdots \ a^{n-1}] \label{Vmatrix}
\end{align}
and  let $V_i \in \mathbb{R}^{n \times (n-1)}$ denote the matrix obtained by removing the $i^{th}$ column vector $a^{i-1}$ from $V$. Since the components of $a$ are distinct and positive by assumption, $V$ and $V_k, k=1\hdots,n$ are all full rank, and the first $n-1$ rows of each $V_k$ form a basis for $\mathbb{R}^{n-1}$ (see \textcolor{blue}{supplemental calculations section (c)}) and \cite{44} for example, for a discussion). 

Letting $\underline{c}_k = [c_{k1} \ c_{k2} \ \hdots \ c_{k n-1}]^T$, recall by proposition \ref{propind} that if the following condition holds, 
\begin{align}
G_k \underline{c}_k &= \mu_k \textbf{e} \implies c_{ki} = 0 \ \forall \ i, \label{Gkcond}
\end{align}
then the columns of $G_k$ generate linearly independent replicator vector fields. Since $G_k = B \Lambda(x) V_{k}$, $\Lambda(x)$ is non-singular in the interior of the simplex and $B$ is non-singular by assumption, we write condition (\ref{Gkcond}) as:
\begin{align}
B \Lambda(x) V_k \underline{c}_k &=\mu_k \textbf{e} \implies c_{ki}=0 \ \forall i \notag\\
\text{or equivalently, }V_k\underline{c}_k &= \mu_k \Lambda^{-1}(x)B^{-1}\textbf{e} \implies \ c_{ki} = 0 \ \forall i \label{eqn}
\end{align}
Next, we seek to prove condition (\ref{eqn}) holds \textbf{for some} $\textbf{k}$. Let $\tilde{V}_k$ denote the invertible matrix composed of the first $n-1$ rows of $V_k$, as in \textcolor{blue}{supplemental calculations section (c)}. Since the rows of $\tilde{V}_k$ form a basis for $\mathbb{R}^{n-1}$, there exist invertible matrices $P_k$ which preserve the first $n-1$ rows of $V_k$ with its last row vector $r_k$ satisfying $r_kV_k = [  0 \ 0 \ \hdots \ 0]$ and can be represented as:
\begin{align}
P_k = \left[\begin{array}{cc}
I_{n-1} & \textbf{0}_{n-1 \times 1} \\
r_k & \
\end{array} \right]
\end{align} 
It is convenient to let the last element in $r_k$ equal to $-1$. Further it can be seen that $r_k a_{k-1}\neq 0$.
Multiplying the antecedent in (\ref{eqn}) by $P_k$ on the left, we get:
\begin{align}
P_k V_k \underline{c}_k &= \mu_k P_k \Lambda^{-1}(x)B^{-1}\textbf{e} \iff  \notag\\
\left[\begin{array}{c}
\tilde{V}_k \underbar{c}_k \\
0
\end{array}\right] &=  \left[\begin{array}{c}
\mu_k \left(P_k\Lambda(x)^{-1}B^{-1}\textbf{e}\right)_{1:n-1} \\
\mu_k \left(P_k\Lambda(x)^{-1}B^{-1}\textbf{e}\right)_n
\end{array} \right] = \left[\begin{array}{c}
\mu_k \left(\Lambda(x)^{-1}B^{-1}\textbf{e}\right)_{1:n-1} \\
\mu_k r_k \Lambda(x)^{-1}B^{-1}\textbf{e}
\end{array}\right] \label{Vn}
\end{align}
where the notation $(w)_{1:n-1}$ refers to the first $n-1$ elements of the vector $w$ and $(w)_n$ refers to its $n^{th}$ element. The antecedent of (\ref{eqn}) can be written as
\begin{align}
\tilde{V}_k \underbar{c}_k &= \mu_k \Lambda(x)^{-1}B^{-1}\textbf{e}, \ \text{and} \notag\\
0 &= \mu_k r_k\Lambda^{-1}(x)B^{-1}\textbf{e} \label{ante}
\end{align}
Now we wish to show that for any $x \in int(\Delta^{n-1})$, there is at least one $k \in \left\{1,2,\hdots,n\right\}$ such that $r_k\Lambda^{-1}(x)B^{-1}\textbf{e}\neq 0$. If we prove this, then the antecedent (\ref{ante}) says that \textit{for that} $k$, $\mu_k=0$, which implies that $\underline{c}_k=0$, which will conclude the proof. 

Consider the matrix $R$, independent of $x$ as follows:
%When $\mu_k = 0$, the only solution for equations (\ref{Vn}) is the trivial solution $\underline{c}_k=\textbf{0}$ due to invertibility of $\tilde{V}_k$. For $\mu_k \neq 0$, if $r_k \Lambda^{-1}(x)B^{-1}\textbf{e} =0$, one can invert $\tilde{V_k}$ to find a solution $\underline{c}_k$ for which (\ref{Vn}) holds. In particular, this solution is non-trivial when $x \in S_k = \left\{y \in \Delta^{n-1}: \Lambda^{-1}(y)B^{-1}\textbf{e} \in \text{range}(V_k)\right\}$. Therefore, when $x \in S_k$, the columns of $G_k$ produce linearly dependent replicator vector fields.
%However, the system of equations (\ref{Vn}) is inconsistent and does not have a solution for any $x$ in the interior only if the last entry of $P_k\Lambda^{-1}(x)B^{-1}\textbf{e}$ is non-zero. If $r_k \Lambda^{-1}(x)B^{-1}\textbf{e} \neq 0$ for at least one of the collections of fitnesses $G_k$, at each $x$, the hypothesis of proposition \ref{propind} is satisfied. For this inconsistency to hold for atleast one $k$ for every point $x$ in the simplex interior, this is equivalently
\begin{align}
R &= \left[ \begin{array}{c}
r_1 \\
r_2 \\
\vdots \\
r_n
\end{array}\right] \in \mathbb{R}^{n \times n}
\end{align}

In the following calculations, we show that the null space of $R$ denoted $\mathcal{N}_R$ is trivial. Suppose that the column vector $v \in \mathbb{R}^n$ satisfies $Rv = \textbf{0}$. Since columns of $V$ form a basis for $\mathbb{R}^n$, we have a representation $v = \sum\limits_{l=1}^{n}d_l a^{l-1}$, $d_l \in \mathbb{R}$. Hence, 
\begin{align}
R v &= 0 \implies r_k v =0 \ \forall \ k=1,\hdots,n. \label{rkv}\notag\\
\text{Since} \ r_k v &=  r_k\left(\sum\limits_{l=1}^{n}d_l a^{l-1}\right) = r_k\left(\sum\limits_{l=1, l \neq k}^{n}d_l a^{l-1} + d_k a^{k-1}\right),\notag\\
\text{Therefore,} \ r_k v &= 0  \ \forall \ k \implies  d_k = 0 \ \forall \ k. \tag{Since $r_k a^{k-1}\neq 0 \ \forall k$} 
\end{align}
The last statement is because $r_k V_k =0$ by construction. Therefore, $\mathcal{N}_R = \left\{\textbf{0}\right\}$. Now suppose there is a $x \in int(\Delta^{n-1})$ such that $r_k\Lambda^{-1}(x)B^{-1}\textbf{e}=0 \ \forall \ k$. Then,
\begin{align}
r_k \Lambda^{-1}(x)B^{-1}\textbf{e}=0 \ \forall \ k \ \iff R\Lambda^{-1}(x)B^{-1}\textbf{e}&=0 \implies \Lambda^{-1}(x)B^{-1}\textbf{e}=0
\end{align}
by the fact that the kernel of $R$ is $\left\{\textbf{0}\right\}$. But this is impossible since $\textbf{e}\neq 0$. Hence, no such $x$ exists. That is, for each $x \in int(\Delta^{n-1})$, there exists $k$ such that $r_k\Lambda^{-1}(x)B^{-1}\textbf{e}\neq 0$.  
\end{proof}
Theorem \ref{conthe} provides a set of sufficient conditions for a special class of replicator control systems to be controllable. It is worth noting that these conditions are not affected by the dimensionality of the simplex. In \cite{18}, the accessibility of the closely related controlled Lotka-Volterra system is considered, with conditions specified on linear and constant logarithmic growth rates. Such dynamics in the absence of control projects to replicator dynamics with linear fitness on the simplex \cite{6}.

\section{Conclusion\label{conclusion}}
Ever since its introduction in \cite{2}, the replicator dynamics has played a major role in biological modeling as well as other fields. In this work, we have considered some previously unexplored Lie algebraic structure and variational principles governing simplex-preserving dynamics. We have noted that every vector field in the interior of the simplex can be viewed as a suitably defined replicator dynamics. Further, by proving that replicator vector fields are closed under the Jacobi-Lie bracket, we set the stage for examining questions of controllability. The notion of the replicator bracket on fitness maps results in a reduction of complexity of calculations involved in finding the Jacobi-Lie bracket of replicator vector fields. Sufficient conditions of Chow and Rashevskii's theorem for controllability of drift free control systems then boil down to conditions on fitness maps. This work initiates a geometric control theory of replicator control systems as in theorem \ref{conthe}, where the control inputs appear multiplicatively. 

The generality of replicator dynamics broadens the applicability of results presented here to other dynamics on the simplex. A natural consideration after ensuring controllability is one of optimality. One can formulate optimal control problems for replicator control systems that aim to minimize a cost $J = \int_{t_0}^{t_1}L dt$
subject to initial and terminal conditions, where the Lagrangian $L$ is chosen to reflect a desired optimality criterion. Some examples include, $L=1$ for time optimality and $L = ||\dot{x}||_{FRS}^2$ to obtain geodesics in a sub-Riemannian setting for a replicator control system. An interesting extension is to consider the optimal control problem for the cost in (\ref{cost}). \\
\\
\textbf{Funding. }This work was supported by the ARL/ARO MURI Program Grant No. W911NF-13-1-0390 through the University of California Davis (as prime), the ARL/ARO Grant No. W911NF-17-1-0156 through the Virginia Polytechnic Institute and State University(as prime), the National Science Foundation CPS:Medium Grant No. 1837589, and by Northrop Grumman Corporation. \\
\\
\textbf{Acknowledgements.} This paper is dedicated to the memory of Jack Riddle.

\enlargethispage{20pt}

\let\oldthebibliography\thebibliography
\let\endoldthebibliography\endthebibliography
\renewenvironment{thebibliography}[1]{
  \begin{oldthebibliography}{#1}
    \setlength{\itemsep}{0em}
    \setlength{\parskip}{0em}
}
{
  \end{oldthebibliography}
}

\newpage
\renewcommand\thesection{A}
\renewcommand{\thesubsection}{(\alph{subsection})}

\section{Supplemental Calculations}
The calculations in sections \ref{supp_proof} and \ref{supp_periodic} summarize results from \cite{13}. 
\subsection{Proof of theorem \ref{varthm}\label{supp_proof}}
Consider the Lagrangian $\mathcal{L}$ with Lagrange multiplier $\lambda$ for the cost functional $J$ in (\ref{cost}). 
\begin{align}
\mathcal{L} = \sum\limits_{k=1}^{n}\left(\dfrac{\dot{x}_k^2}{x_k} + x_k\left(f^k(x) - \bar{f}(x)\right)^2 + \lambda x_k\right) - \lambda
\end{align}
Then, the associated Euler-Lagrange equations are given by:
\begin{align}
\dfrac{d}{dt}\dfrac{\partial \mathcal{L}}{\partial \dot{x}_i} &= \dfrac{\partial \mathcal{L}}{\partial x_i} 
\end{align}
Calculating each term separately, we get:
\begin{align}
\dfrac{\partial \mathcal{L}}{\partial\dot{x}_i} &= 2\dfrac{\dot{x}_i}{x_i} \implies \dfrac{d}{dt}\dfrac{\partial \mathcal{L}}{\partial \dot{x}_i} = 2\dfrac{[x_i \ddot{x}_i -\dot{x}_i^2]}{x_i^2}, \notag\\
\dfrac{\partial \mathcal{L}}{\partial x_i} &= -\dfrac{\dot{x}_i^2}{x_i^2} + \left(f^i(x)-\bar{f}(x)\right)^2 + 2 \sum\limits_{k=1}^{n}x_k\left(f^k(x) - \bar{f}(x)\right)\left(\dfrac{\partial f^k(x)}{\partial x_i} - f^i(x) - \sum\limits_j x_j\dfrac{\partial f^j(x)}{\partial x_i}\right) + \lambda
\end{align}
which gives the Euler-Lagrange equations for $i=1,\hdots,n$ as follows:
\begin{align}
2\ddot{x}_i &= \dfrac{\dot{x}_i^2}{x_i} + x_i\left(f^i(x) - \bar{f}(x)\right)^2 + 2x_i\left[\sum_kx_k\left(f^k(x)-\bar{f}(x)\right)\left(\dfrac{\partial f^k(x)}{\partial x_i} - \sum_j x_j\dfrac{\partial f^j(x)}{\partial x_i}\right)\right] + x_i \lambda \label{el}
\end{align}
Differentiating the replicator equations,
\begin{align}
&\dot{x}_i = x_i\left(f^i(x) - \bar{f}(x)\right) \implies \notag\\
2\ddot{x}_i &= 2\left[x_i\left(f^i(x) - \bar{f}(x)\right)^2\right] +2\left[ x_i\sum_kx_k\left(f^k(x)-\bar{f}(x)\right)\left(\dfrac{\partial f^i(x)}{\partial x_k} - f^k(x) - \sum_j x_j\dfrac{\partial f^j(x)}{\partial x_k}\right) \right]\notag\\
&= 2\left[x_i\left(f^i(x)-\bar{f}(x)\right)^2 + x_i\sum_jx_k\left(f^k(x)-\bar{f}(x)\right)\left(\dfrac{\partial f^i(x)}{\partial x_k} - \sum_j x_j\dfrac{\partial f^j(x)}{\partial x_k}\right)\right]\notag\\
&- 2\left[x_i \sum_k x_k\left(f^k(x) - \bar{f}(x)\right)^2  \right], \ \text{since $\sum\limits_{i=1}^{n}x_i(f^i(x)-\bar{f}(x))=0.$} \label{xdd}
\end{align}
Comparing (\ref{el}) and (\ref{xdd}), we see that the replicator equations (\ref{reel}) satisfy the Euler-Lagrange equations with the Lagrange multiplier $\lambda = -2\sum\limits_{k=1}^{n} x_k \left(f^k(x)-\bar{f}(x)\right)^2$. 
\subsection{Existence of periodic orbits in the phase space of the simplex\label{supp_periodic}}
In this section, we state results from \cite{13} that suggest the existence of periodic trajectories for the Hamiltonian dynamics (\ref{ham}). 
\begin{definition}
A diffeomorphism $F : M \rightarrow M$ from a manifold $M$ to itself is said to be an involution if $F \neq id_M$, the identity diffeomorphism, and $F^2 = id_M$, i.e. $F(F(m)) = m,\forall \ m \in M$.
\end{definition}

\begin{definition}
A vector field $X$ defined over a manifold $M$ is said to be $F-$reversible, if there exists an involution $F$ such that: $F_*(X) = −X$; i.e. $F$ maps orbits of $X$ to orbits of $X$, reversing the time parametrization. Here $(F_{*}(X))(m) = (DF)_{F^{-1}(m)} X(F^{-1}(m)) \ $ $\forall \ m \in M$ is the push-forward of $X(m)$. We call $F$ the reverser of $X$.
\end{definition}

\begin{theorem}
(\textit{G. D. Birkhoff} \cite{14}). Let X be a $F-$reversible vector field on $M$ and $\Sigma_F$ the fixed-point set of the reverser $F$. If an orbit of $X$ through a point of $\Sigma_F$ intersects $\Sigma_F$ in another point, then it is periodic. 
\end{theorem}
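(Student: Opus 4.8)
The plan is to transfer the infinitesimal reversibility hypothesis $F_*(X) = -X$ into a symmetry of the flow of $X$, and then to use the two fixed points of $F$ to force the orbit to close. Write $\phi_t$ for the (local) flow of $X$. Since the pushforward of a vector field carries integral curves of $X$ to integral curves of $F_*(X)$, the flow of $F_*(X)$ is $F \circ \phi_t \circ F^{-1}$. Because $F_*(X) = -X$, whose flow is $\phi_{-t}$, and because $F$ is an involution so that $F^{-1} = F$, I obtain the basic reversal identity
\begin{align}
F \circ \phi_t &= \phi_{-t} \circ F. \label{revid}
\end{align}
This identity is the workhorse of the argument.

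Next I would set up the hypothesis concretely. Let $p \in \Sigma_F$ be the initial point, so that $F(p) = p$, and suppose the orbit through $p$ meets $\Sigma_F$ again at a point $q \neq p$, say $q = \phi_T(p)$ with $F(q) = q$. Since $q \neq p$, the point $p$ is not an equilibrium and $T \neq 0$. Applying $F$ to the relation $q = \phi_T(p)$ and invoking \eqref{revid} together with $F(p) = p$ gives $F(q) = \phi_{-T}(F(p)) = \phi_{-T}(p)$; but $F(q) = q$, so $q = \phi_{-T}(p)$ as well.

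Comparing the two expressions for $q$ yields $\phi_T(p) = \phi_{-T}(p)$, and applying $\phi_T$ to both sides produces
\begin{align}
\phi_{2T}(p) &= p, \qquad 2T \neq 0,
\end{align}
so the orbit through $p$ is periodic, as claimed.

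The only genuine subtlety lies in the first step: justifying \eqref{revid} cleanly, namely that the pointwise pushforward condition $F_*(X) = -X$ integrates to the stated conjugacy of flows. Once that is established, the remainder is a two-line manipulation that uses nothing beyond $F(p) = p$ and $F(q) = q$. I would also note that completeness of $X$ is not needed, because the hypothesis already supplies the finite time $T$ at which $\phi_T(p) = q$ is defined, so the flow values $\phi_{\pm T}(p)$ and $\phi_{2T}(p)$ appearing above all exist along the given orbit.
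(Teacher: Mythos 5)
Your proof is correct and complete. One point of comparison worth making explicit: the paper does not actually prove this theorem --- it states it as Birkhoff's result and defers the proof to references \cite{35,36} (``We refer to \cite{35,36} for a proof of Birkhoff's theorem''), so there is no in-paper argument to measure yours against; what you have written is the standard self-contained proof. The two ingredients you isolate are exactly the right ones: first, integrating the infinitesimal condition $F_*(X)=-X$ to the flow conjugacy $F\circ\phi_t=\phi_{-t}\circ F$ (using $F^{-1}=F$), and second, the manipulation $q=\phi_T(p)$, $q=F(q)=\phi_{-T}(F(p))=\phi_{-T}(p)$, hence $\phi_{2T}(p)=p$ with $2T\neq 0$. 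Your closing remark about domains is also sound and worth keeping: since $t\mapsto F(\phi_t(p))$ is an integral curve of $-X$ through $F(p)=p$, the backward flow $\phi_{-t}(p)$ is defined for the same times $t$ as the forward flow, so no completeness hypothesis is needed; and since the orbit contains the two distinct points $p$ and $q$, the relation $\phi_{2T}(p)=p$ exhibits a genuine closed orbit rather than an equilibrium.
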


We refer to \cite{35,36} for a proof of Birkhoff's theorem. The reverser $F$ in our problem is defined in the proposition below.

\begin{proposition}
The vector field defined by the Hamiltonian dynamics (\ref{ham}) is $F - $reversible, with the map $F$ given by $F(y,p) = \left(y,-p\right)$. \label{F}
\end{proposition}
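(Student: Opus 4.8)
The plan is to show, directly from the explicit form of the Hamiltonian dynamics (\ref{ham}), that $F(y,p)=(y,-p)$ is an involution whose push-forward negates the Hamiltonian vector field $X_H$. First I would dispose of the involution requirement: clearly $F\neq\mathrm{id}$, and $F^2(y,p)=F(y,-p)=(y,p)$, so $F^2=\mathrm{id}$. Since $F^{-1}=F$ and the Jacobian $DF=\mathrm{diag}(I,-I)$ is constant, the definition of push-forward collapses to
\begin{align}
(F_*X_H)(y,p) = DF\,X_H(y,-p), \notag
\end{align}
so the whole problem reduces to evaluating the right-hand side of (\ref{ham}) at $(y,-p)$ and then premultiplying by $DF$.

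The heart of the argument I would carry out is a parity observation in the momentum $p$. The velocity equation $\dot y=\tfrac12\tilde G^{-1}(y)p$ is linear, hence odd in $p$, and so flips sign under $p\mapsto -p$. The co-momentum equation $\dot p=g(y,p)-\tfrac{\partial V}{\partial y}$ is instead even in $p$: the potential term $\tfrac{\partial V}{\partial y}$ carries no $p$-dependence, while each component $g_i(y,p)=-\tfrac14\,\partial_{y_i}\!\big(p^T\tilde G^{-1}(y)p\big)$ is the $y$-derivative of the quadratic form $p^T\tilde G^{-1}p$, which is manifestly invariant under $p\mapsto-p$, so $g(y,-p)=g(y,p)$. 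Combining these gives
\begin{align}
X_H(y,-p) = \left(-\tfrac12\tilde G^{-1}(y)p,\ g(y,p)-\tfrac{\partial V}{\partial y}\right) = (-\dot y,\ \dot p), \notag
\end{align}
where $\dot y,\dot p$ denote the components of $X_H$ at $(y,p)$.

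I would then finish the computation by applying $DF=\mathrm{diag}(I,-I)$, which leaves the first block unchanged and negates the second, yielding $DF\,X_H(y,-p)=(-\dot y,\ -\dot p)=-X_H(y,p)$. Hence $F_*X_H=-X_H$, which is exactly the $F$-reversibility claimed.

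Finally I would record the coordinate-free reason behind the cancellation, both as a sanity check and to clarify what is essential. Since $T(y,-p)=T(y,p)$ the full Hamiltonian satisfies $H\circ F=H$, and $F$ is anti-symplectic for the canonical form $\omega$ (because $F^*dy_i=dy_i$ while $F^*dp_i=-dp_i$, so $F^*\omega=-\omega$); for any such pair, naturality of interior products together with $\iota_{X_H}\omega=dH$ forces $F_*X_H=-X_H$ by nondegeneracy of $\omega$. The only delicate point — and the one I would flag as the main, if modest, obstacle — is confirming the even-in-$p$ parity of the drift $g(y,p)$, since that is the single term that could a priori destroy the sign reversal; once its quadratic-form origin is identified the remaining sign bookkeeping is automatic.
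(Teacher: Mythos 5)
Your proof is correct and follows essentially the same route as the paper: verify $F^2=\mathrm{id}$, note that $DF=\mathrm{diag}(\mathbb{I},-\mathbb{I})$, and use the parity of the two blocks of $X_H$ in $p$ (the $\dot y$ equation odd, the $\dot p$ equation even because $g(y,-p)=g(y,p)$ and $\partial V/\partial y$ is $p$-independent) to conclude $F_*X_H=-X_H$. The closing coordinate-free remark about $F$ being anti-symplectic with $H\circ F=H$ is a nice addition not present in the paper, but the core computation is the same.
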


\begin{proof}
Let the Hamiltonian vector field in (\ref{ham}) be denoted $X_{H}$. We note that $F$ is an involution since $F^2(y,p) = F(y,-p) = (y,p)$. We calculate the pushforward of $F$ as follows:
\begin{align}
&(F_*(X_H))(y,p) = (DF)_{F^{-1}(y,p)}X(F^{-1}(y,p)) \notag\\
&= \left[\begin{array}{cc}
\mathbb{I} & 0 \\
0 & -\mathbb{I}
\end{array}\right]\left[\begin{array}{c}
-\dfrac{1}{2}\tilde{G}^{-1}(y)p \\
g(y,-p) - \dfrac{\partial V(y)}{\partial y}
\end{array}\right] = -\left[\begin{array}{c}
\dfrac{1}{2}\tilde{G}^{-1}(y)p \\
g(y,p) - \dfrac{\partial V(y)}{\partial y}
\end{array}\right] \notag\\
&= -X_H(y,p)
\end{align}
where $0, \mathbb{I}$ are respectively $n-1$ dimensional zero and identity matrices. We are now ready to state the theorem on the existence of periodic orbits for the Hamiltonian dynamics (\ref{ham}) in the special case $n=2$ corresponding to the $1-$ dimensional simplex. 
\end{proof}

\begin{theorem}
Consider the Hamiltonian system defined on 
\begin{align}
M = \left\{\left(y,p\right): y  = \left[x_1 \ \hdots \ x_{n-1}\right] , x \in int(\Delta^{n-1}), p \in \mathbb{R}^{n-1}\right\}
\end{align}
and a frequency dependent fitness $f(x) \in \mathbb{R}^{n}$ such that the Hamiltonian function is given as
\begin{align}
H(y,p) = T(y,p)+V(y)
\end{align} 
where the kinetic energy term is $T(y,p) = \dfrac{1}{4}p^T\tilde{G}^{-1}(y)p$ and the potential energy term is $V(y) = -\sum\limits_{k=1}^{n-1}y_k\left(f^k(y)-\bar{f}\right)^2 - \left(1-\sum\limits_{k=1}^{n-1}y_k\right)\left(f^n(y)-\bar{f}\right)^2$ with the Hamilton's equations given by (\ref{ham}). For the case $n=2$, the level sets of $H$ are $1-$ dimensional in phase space. Assuming that for a fixed $c$ the level set has one connected component, then for $c<0$, the trajectory of this dynamics is a periodic orbit if the nonlinear equation $V(y)=c$ has two distinct solutions for $y \in int(\Delta^{1})$. \label{per}
\end{theorem}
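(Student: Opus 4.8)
The plan is to obtain periodicity from Birkhoff's theorem by exploiting the reversing symmetry already established in Proposition \ref{F}. Recall that the Hamiltonian vector field $X_H$ of (\ref{ham}) is $F$-reversible with $F(y,p)=(y,-p)$, so its fixed-point set is $\Sigma_F=\{(y,p):p=0\}$, which for $n=2$ is the open segment $\{(x_1,0):x_1\in(0,1)\}$. Since $H$ is a first integral of $X_H$, every orbit is confined to a level set $\{H=c\}$. The strategy is therefore to locate two distinct points of $\Sigma_F$ on a single such level set and then argue that the orbit through one of them passes through the other; Birkhoff's theorem \cite{14} then delivers periodicity at once.

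First I would compute the intersection $\Sigma_F\cap\{H=c\}$. For $n=2$ the kinetic term $T(y,p)=\tfrac14 p^T\tilde{G}^{-1}(y)p$ vanishes on $\Sigma_F$, so $H(x_1,0)=V(x_1)$, and a point $(x_1,0)$ lies on $\{H=c\}$ precisely when $V(x_1)=c$. By hypothesis $V(y)=c$ has two distinct roots $y_1\neq y_2$ in $int(\Delta^1)$, producing two distinct points $(y_1,0),(y_2,0)\in\Sigma_F\cap\{H=c\}$. These are the turning points of the motion, where $\dot y=\tfrac12\tilde{G}^{-1}p=0$.

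Next I would show that the connected level set $\{H=c\}$ is a compact, regular one-dimensional submanifold, i.e. a closed loop. Because $V\le 0$ (with equality only at replicator equilibria) and $V\to 0$ as $x_1$ approaches either simplex vertex, the admissible region $\{y:V(y)\le c\}$ with $c<0$ stays away from the boundary; together with the connectedness hypothesis and the two roots it is the single closed interval $[y_1,y_2]\subset(0,1)$. There $T=c-V\ge 0$ gives the two branches $p=\pm\big(4(c-V)/(x_1(1-x_1))\big)^{1/2}$ meeting at $p=0$ over $y_1$ and $y_2$. Since $[y_1,y_2]$ is bounded away from the boundary, $x_1(1-x_1)$ is bounded below by a positive constant while $c-V$ is bounded above, so $p$ stays bounded and the level set is closed, bounded, hence compact, and contained in $M$. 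Equilibria of $X_H$ occur only at $(y^*,0)$ with $V'(y^*)=0$, so transversality of the roots, $V'(y_1),V'(y_2)\neq 0$, makes $X_H$ nowhere zero on the loop and $c$ a regular value. A compact connected one-manifold without boundary carrying a nonvanishing vector field is diffeomorphic to a circle, so the orbit through $(y_1,0)$ traverses the whole component and in particular reaches $(y_2,0)$.

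Finally, the orbit through $(y_1,0)\in\Sigma_F$ meets $\Sigma_F$ again at the distinct point $(y_2,0)$, so Birkhoff's theorem yields that this orbit is periodic. I expect the main obstacle to be the third step: verifying that the single orbit actually fills the whole connected level set, that is, that the component is compact (no escape to the simplex boundary or to $p=\pm\infty$ in finite time) and that the two turning points are regular rather than equilibria. This is exactly where the hypotheses $c<0$, the confinement $[y_1,y_2]\subset(0,1)$, and the transversality $V'(y_i)\neq 0$ are essential; with these in hand the reversing symmetry, which forces $F(\gamma(t))=\gamma(-t)$, makes the closure of the loop automatic.
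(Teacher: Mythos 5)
Your proof follows essentially the same route as the paper's: both invoke Birkhoff's theorem with the reverser $F(y,p)=(y,-p)$ from Proposition \ref{F} and identify the intersections of the level set $\{H=c\}$ with the fixed-point set $\Sigma_F$ as the roots of $V(y)=c$. The only difference is that you spell out why the connected level set is a compact loop traversed by a single orbit (correctly noting the implicit need for the turning points to be non-critical, $V'(y_i)\neq 0$), whereas the paper compresses this step into the remark that ``the connectivity assumption means that a level set is an orbit.''
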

\begin{proof}
Consider the map $F : M \rightarrow M$ such that $F(y,p) = (y,-p)$. By proposition \ref{F}, the Hamiltonian vector field is reversible with $F$ as the reverser. Next, we note that the fixed point set of the map 
\begin{align}
\Sigma_F = \left\{\left(y,p\right): y  = \left[x_1 \ \hdots \ x_{n-1}\right] , x \in int(\Delta^{n-1}), p=0\right\}
\end{align} 
To find the intersections of orbits in $H(t)\equiv c \neq 0$, with $\Sigma_F$, we substitute $p=0$ in the Hamiltonian to get $V(y)=c$. In the case $n=2$, the connectivity assumption means that a level set is an orbit. If the equation $V(y)=c$ has two distinct roots in $int(\Delta^1)$, the orbits in the level sets of the Hamiltonian intersect $\Sigma_F$ twice. It follows from Birkhoff's theorem that such orbits are periodic. 
\end{proof}

For the dynamics (\ref{pd}), using theorem \ref{per} above, for $H(0)=c$ the trajectories of this dynamics in the non-zero level sets of the Hamiltonian function consist of periodic orbits if the polynomial
\begin{align}
[-a^2]x_1^4 + [a^2 - 2ab]x_1^3 + [-b^2 + 2ab]x_1^2 + [b^2]x_1 + c =0 \label{cond}
\end{align}
has two distinct roots in $int(\Delta^1)$. Due to Birkhoff's theorem, the number of intersections of trajectories in the level set $H(t)\equiv c$ with $\Sigma_F$ can be obtained by solving for $x_1$ such that $p_1 = 0$. From (\ref{hamfunc}), setting $p_1=0$ is equivalent to (\ref{cond}). The dynamics has equilibria at $(0,\pm 2b), (1,\pm2(a+b))$. Additionally, there are as many equilibria $(x^*,0)$ as the number of solutions $x^* \in (0,1)$ to 
\begin{align}
(ax+b)[(1-2x_1)(ax_1+b)+2ax_1(1-x_1)]=0
\end{align}
The roots of equation (\ref{cond}) satisfy the criteria of theorem \ref{per} for initial conditions. 

\subsection{Non-singularity of Vandermonde minors\label{vdmdiscussion}}
Recall that $V_k, k=1,\hdots, n$ is the $n \times (n-1)$ matrix obtained by removing the $k^{th}$ column of $V$. That is, for $a = [a_1 \ \hdots \ a_n]^T$ and $a^k \triangleq [a_1^k  \ \hdots \ a_n^k]^T$, with $a^0 = \textbf{e}$, we have
\begin{align}
V_1 &= [a \ a^2 \ \hdots \ a^{n-1}] \notag\\
V_k &= [\textbf{e} \ a \ a^2 \ \hdots a^{k-2} \ a^k \ \hdots \ a^{n-1}], k=2, \hdots, n-1 \notag\\
V_n &= [\textbf{e} \ a \ \hdots \ a^{n-2}] \notag
\end{align} 
Let $\tilde{V_k}$ denote the $n-1 \times n-1$ matrix obtained by removing the last row of $V_k$. That is, for $\tilde{a} = [a_1 \ a_2 \ \hdots \ a_{n-1}]^T$, and $\tilde{a}^k \triangleq [a_1^k \ \hdots \ a_{n-1}^k]^T$ and $\tilde{\textbf{e}} = \tilde{a}^0$ we have 
\begin{align}
\tilde{V}_1 &= [\tilde{a} \ \tilde{a}^2 \ \hdots \ \tilde{a}^{n-1}] \notag\\
\tilde{V}_k &= [\tilde{\textbf{e}} \ \tilde{a} \ \tilde{a}^2 \ \hdots \tilde{a}^{k-2} \ \tilde{a}^k \ \hdots \ \tilde{a}^{n-1}], k=2, \hdots, n-1 \notag\\
\tilde{V}_n &= [\tilde{\textbf{e}} \ \tilde{a} \ \hdots \ \tilde{a}^{n-2}] \notag
\end{align}
Let $\tilde{V}$ denote the following Vandermonde matrix:
\begin{align}
\tilde{V} &= [\tilde{\textbf{e}}\ \tilde{a} \ \hdots \ \tilde{a}^{n-2}]
\end{align} 
If $a_i$'s are distinct and positive, both $V$ as in (\ref{Vmatrix}) and $\tilde{V}$ are non-singular. The absolute values of determinants of $\tilde{V}_k$ satisfy the following identity:
\begin{align}
|det(\tilde{V}_k)| = |det(\tilde{V})|s_{\lambda_k}(a_1,\hdots,a_{n-1}) \label{schur}
\end{align}
where $s_{\lambda_k}$ is a Schur polynomial \cite{44} of shape $\lambda_k = (\lambda_{1k}, \lambda_{2k},\hdots,\lambda_{(n-1)k})$ satisfying
\begin{align}
\lambda_{j1} &= 1 \ \forall j=1,\hdots,n-1 \notag\\
\lambda_{jk} &= 0 \ \forall j < n-k, \ \lambda_{jk} = 1 \ \forall j\geq n-k, k=2,\hdots,n-2 \notag\\
\lambda_{j (n-1)} &= 0 \ \forall j=1,\hdots,n-1 
\end{align}
Further the Schur polynomials $s_{\lambda_k}(a_1,\hdots,a_{n-1})$ can be written as the sum of monomials obtained from semi-standard Young tableaux of shape $\lambda_k$ defined by $a_i, i=1,\hdots, n-1$. Therefore, $s_{\lambda_k}(a_1,\hdots,a_{n-1}) > 0 \ \forall \ k$. This along with (\ref{schur}) and the assumptions on $a_i$ guarantee non-singularity of $\tilde{V}_k$.


\begin{thebibliography}{9}
\bibitem{1} Maynard Smith J, Price GR. 1973 The logic of animal conflict. \textit{Nature} \textbf{246}, 15-18.
\bibitem{2}  Taylor PD, Jonker LB. 1978 Evolutionary stable strategies and game dynamics. \textit{Mathematical
biosciences} \textbf{40}, 145-156.
\bibitem{3} Schuster P, Sigmund K. 1983 Replicator dynamics. \textit{Journal of theoretical biology} \textbf{100}, 533-538.
\bibitem{4} Sigmund K. 1986 A survey of replicator equations. In \textit{Complexity, Language, and Life: Mathematical Approaches} pp. 88-104. Springer.
\bibitem{5} Svirezhev YM, Passekov VP. 2012 Fundamentals of mathematical evolutionary genetics vol. \textbf{22}. Springer Science \& Business Media.
\bibitem{6} Hofbauer J, Sigmund K. 1998 \textit{Evolutionary games and population dynamics}. Cambridge university press.
\bibitem{7} Wei E, Justh EW, Krishnaprasad PS. 2009 Pursuit and an evolutionary game. \textit{Proceedings of the Royal Society A: Mathematical, Physical and Engineering Sciences} \textbf{465}, 1539-1559.
\bibitem{8} Sandholm WH. 2009 Evolutionary game theory. In \textit{Encyclopedia of Complexity and Systems Science} pp. 3176-3205. Springer.
\bibitem{9} Durney C, Case S, Pleimling M, Zia R. 2011 Saddles, arrows, and spirals: deterministic trajectories in cyclic competition of four species. \textit{Physical Review E} \textbf{83}, 051108-1 - 051108-13.
\bibitem{10} Isidori A. 1995 \textit{Nonlinear control systems}. Springer Science \& Business Media.
\bibitem{11} Svirezhev YM. 1972 Optimum principles in genetics. \textit{Studies on theoretical genetics} pp. 86-102.
\bibitem{12} Akin E. 1979 The geometry of population genetics.\textit{ Lecture Notes in Biomathematics} \textbf{31}.
\bibitem{13} Raju V, Krishnaprasad PS. 2018 A variational problem on the probability simplex. In \textit{Proceedings of the IEEE 57th Conference on Decision and Control} pp. 3522-3528.
\bibitem{14} Birkhoff GD. 1915 The restricted problem of three bodies. \textit{Rendiconti del Circolo Matematico di Palermo (1884-1940)} \textbf{39}, 265.
\bibitem{15} Pontryagin LS, Boltyanskiy VG, Gamkrelidze RV, Mishchenko EF. 1962 \textit{Mathematical theory of optimal processes}. Interscience.
\bibitem{16} Halder U, Raju V, P.S. Krishnaprasad, Dey B, Mischiati M. 2019 Cognitive Cost of Flocking: A Geometric and Hamiltonian Perspective. \textit{draft}.
\bibitem{17} Redheffer R. 1996 Nonautonomous Lotka-Volterra systems, I. \textit{Journal of differential equations} \textbf{127}, 519-541.
\bibitem{18} De Leenheer P, Aeyels D. 2000 Accessibility properties of controlled Lotka-Volterra systems. In \textit{Proceedings of the 39th IEEE Conference on Decision and Control} vol. \textbf{4} pp. 3977-3981.
\bibitem{19} Raju V. 2019 Cognitive Control, Evolutionary Games, and Lie Algebras. Ph.D. \textit{Thesis, Department of Electrical and Computer Engineering, University of Maryland, College Park.}
\bibitem{20} Fisher RA. 1922 On the mathematical foundations of theoretical statistics. \textit{Philosophical Transactions of the Royal Society of London. Series A, Containing Papers of a Mathematical or Physical Character} \textbf{222}, 309-368.
\bibitem{21}  Rao C. 1945 Information and accuracy attainable in the estimation of statistical parameters. \textit{Bull Calcutta. Math. Soc}. \textbf{37}, 81-91.
\bibitem{22} Shahshahani S. 1979 A New Mathematical Framework for the Study of Linkage and Selection. \textit{Memoirs Am. Math. Soc.} pp. 17-211.
\bibitem{23} Price GR. 1972 Extension of covariance selection mathematics. \textit{Annals of human genetics}\textbf{ 35}, 485-490.
\bibitem{24} Frank SA. 2020 Simple unity among the fundamental equations of science. \textit{Philosophical Transactions of the Royal Society B }\textbf{375}.
\bibitem{25} Rahav S, Horowitz J, Jarzynski C. 2008 Directed flow in nonadiabatic stochastic pumps. \textit{Physical review letters} \textbf{101}, 140602-1 - 140602-4.
\bibitem{26} Aron JL, Schwartz IB. 1984 Seasonality and period-doubling bifurcations in an epidemic model. \textit{Journal of theoretical biology} \textbf{110}, 665-679.
\bibitem{27} Akin E. 1982 Cycling in simple genetic systems. \textit{Journal of Mathematical Biology} \textbf{13}, 305-324.
\bibitem{28} Hofbauer J. 1985 The selection mutation equation. \textit{Journal of mathematical biology} \textbf{23}, 41-53.
\bibitem{29} Weinstein A. 1998 Poisson geometry. \textit{Differential Geometry and its applications} \textbf{9}, 213-238.
\bibitem{30} Bloch AM. 2003 \textit{Nonholonomic mechanics and control} vol. \textbf{24}. Springer.
\bibitem{31} Schoemaker PJ. 1991 The quest for optimality: A positive heuristic of science?. \textit{Behavioral and Brain Sciences} \textbf{14}, 205-215.
\bibitem{32} Ewens WJ. 2012 \textit{Mathematical population genetics 1: theoretical introduction} vol. 27. Springer Science \& Business Media.
\bibitem{33} Ewens WJ, Hastings A. 1995 Aspects of optimality behavior in population genetics theory. In \textit{Evolution and Biocomputation} pp. 7-17. Springer.
\bibitem{34} Hastings A, Fox GA. 1995 Optimization as a technique for studying population genetics equations. In \textit{Evolution and Biocomputation} pp. 18-26. Springer.
\bibitem{35} Mischiati M, Krishnaprasad PS. 2012 The dynamics of Mutual Motion Camouflage.\textit{ Systems \& Control Letters} \textbf{61}, 894-903.
\bibitem{36} Hermans J. 1995 \textit{Rolling Rigid Bodies: with and Without Symmetries}. PhD thesis Universiteit Utrecht.
\bibitem{37} Austin MA, Krishnaprasad PS, Wang LS. 1993 Almost Poisson integration of rigid body systems. \textit{Journal of Computational Physics }\textbf{107}, 105-117.
\bibitem{38} Fox MJ, Shamma JS. 2013 Population games, stable games, and passivity. \textit{Games }\textbf{4}, 561-583.
\bibitem{39} Riehl J, Ramazi P, Cao M. 2018 A survey on the analysis and control of evolutionary matrix games. \textit{Annual Reviews in Control}.
\bibitem{40} Hermann R. 1968 Accessibility problems for path systems. \textit{Differential geometry and the calculus of variations} pp. 241-257.
\bibitem{41} Chow W. 1940 Uber systeme von linearen partiellen differentialgleichungen erster ordnung. \textit{Mathematische Annalen} \textbf{117}, 98-105.
\bibitem{42} Rashevskii P. 1938 About connecting two points of complete non-holonomic space by admissible curve (in Russian). \textit{Uch. Zapiski Ped. Inst. Libknexta }\textbf{2}, 83-94.
\bibitem{43} Sandholm WH. 2010 \textit{Population games and evolutionary dynamics}. MIT press.
\bibitem{44} Tamvakis H. 2012 The theory of Schur polynomials revisited. \textit{L'Enseignement Math\'ematique} \textbf{58}, 147-163.
\bibitem{45} Smale S. On the differential equations of species in competition, \textit{Journal of Mathematical Biology}, \textbf{3} (1976), 5-7.
\bibitem{46} Alishah, H.N., 2020 Conservative replicator and Lotka-Volterra equations in the context of Dirac big-isotropic structures. \textit{Journal of Geometric Mechanics}.
\bibitem{47} Alishah, H.N. and Duarte, P., 2015 Hamiltonian evolutionary games. \textit{Journal of Dynamics and Games}, \textbf{2}(1), p.33. doi: \href{http://dx.doi.org/10.3934/jgm.2020008}{http://dx.doi.org/10.3934/jgm.2020008 }
\end{thebibliography}
\end{document}